\documentclass[oneside,english]{siamltex}
\usepackage[T1]{fontenc}
\usepackage[latin9]{inputenc}
\usepackage{graphicx}
\usepackage{setspace}
\usepackage{amssymb}
\usepackage{esint}
\onehalfspacing

\makeatletter

\newcommand{\noun}[1]{\textsc{#1}}

\newcommand\eqref[1]{(\ref{#1})}
\newenvironment{lyxlist}[1]
{\begin{list}{}
{\settowidth{\labelwidth}{#1}
 \setlength{\leftmargin}{\labelwidth}
 \addtolength{\leftmargin}{\labelsep}
 }}
{\end{list}}

\usepackage{mathrsfs}

\makeatother

\usepackage{babel}

\begin{document}

\title{Scandalously Parallelizable Mesh Generation}

\author{D. M. Bortz\thanks{Applied Mathematics, University of Colorado, Boulder, CO 80309-0526 (dmbortz@colorado.edu)} \and
A. J. Christlieb\thanks{Department of Mathematics, Michigan State University, D304 Wells Hall, East Lansing, MI 48824-1027 (christlieb@math.msu.edu).}}
\maketitle
\begin{abstract}
We propose a novel approach which employs random sampling to generate
an accurate non-uniform mesh for numerically solving Partial Differential
Equation Boundary Value Problems (PDE-BVP's). From a uniform probability
distribution $\mathcal{U}$ over a 1D domain, we sample $M$ discretizations
of size $N$ where $M\gg N$. The statistical moments of the solutions
to a given BVP on each of the $M$ ultra-sparse meshes provide insight
into identifying highly accurate non-uniform meshes. Essentially,
we use the pointwise mean and variance of the coarse-grid solutions
to construct a mapping $Q(x)$ from uniformly to non-uniformly spaced
mesh-points. The error convergence properties of the approximate solution
to the PDE-BVP on the non-uniform mesh are superior to a uniform mesh
for a certain class of BVP's. In particular, the method works well
for BVP's with locally non-smooth solutions. We present a framework
for studying the sampled sparse-mesh solutions and provide numerical
evidence for the utility of this approach as applied to a set of example
BVP's. We conclude with a discussion of how the near-perfect paralellizability
of our approach suggests that these strategies have the potential
for highly efficient utilization of massively parallel multi-core
technologies such as General Purpose Graphics Processing Units (GPGPU's).
We believe that the proposed algorithm is beyond embarrassingly parallel;
implementing it on anything but a massively multi-core architecture
would be scandalous.\end{abstract}
\begin{keywords}
Mesh Generation, Non-uniform mesh, Inverse problems, Parallel Computing
\end{keywords}

\section{\label{sec:Introduction}Introduction}

A wide range of linear and non-linear boundary value problems, such
as the Poisson, Helmholtz, and non-linear Poisson-Boltzmann equations,
arise in protein folding, molecular dynamics, neutral and non-neutral
plasmas, acoustics and antenna design, etc. In many of these applications,
sparse mesh-points are sufficient over vast regions of the domain,
while large, iterative, and parallel computations using a high density
of points are necessary for obtaining accuracy in the vicinity of
complex objects. In this paper, we propose a novel approach for generating
mesh-points to be used in simulating solutions to Boundary Value Problems
(BVP). The class of methods is designed to discover accurate non-uniform
discretizations via a sparse stochastic approximation.

State of the Art numerical methods typically have some mechanism for
refining the underlying mesh. At some level, all adaptive methods
are attempting to identify the optimal mesh, providing uniform accuracy
over a given domain. Adaptive methods have been heavily employed in
problems with complex geometry, where sharp geometric features can
make the simulation difficult to resolve, or in simulations where
shocks or discontinuities can arise over time.

Ideally, we would like an unstructured mesh that is designed to use
coarse resolution where the solution changes little, and fine resolution
where transitions take place, such as near a boundary. This is not
a new idea, as this is the guiding principle in deciding where to
apply Adaptive Mesh Refinement (AMR) for Hyperbolic conservation laws.
Recent work in Hyperbolic conservation laws has gone as far as developing
metrics to guide refinement which are based on an estimated error
between the numerical solution and the weak solution \cite{kk2005,kkp2002}.
Like with hyperbolic problems, the idea of working on the smallest
mesh possible while maintaining a uniform accuracy is not a new topic
when it comes to the area of numerical boundary value problems. For
boundary value problems with Lipschitz sources there are a range of
methods which can be employed to increase accuracy at boundaries where
there are transitions. One of the simplest is to make use of Chebyshev
points, which pack resolution near boundaries and still allow one
to construct high order approximate inverse operators \cite{atkinson1989}.
However, in a wide range of nonlinear boundary value problems, there
are interesting issues which arise, including the formation of interior
layers or singularities at unknown locations. In 1D, if one knew where
a transition layer or singularity is located, one could design Chebyshev
points to increase resolution nearby. Unfortunately, it may be non-trivial
to identify the transition layer or develop an efficient implementation
in higher dimensions.

Quasi Monte Carlo and low discrepancy sequences offer an alternative
approach to sparse mesh theory \cite{caflisch1998,mc1995,niederreiter1978}.
A Monte Carlo method is a statistical approach to compute the volume
of a complex domain in high dimensions. Briefly, given a complex domain,
$D_{c}$, one encompasses the domain in a simple region $D$ and then
points are sampled from $D$ and tested to see whether they lie inside
of $D_{c}$. The fraction of the points from $D$ that lie inside
$D_{c}$ are used to estimate the volume of domain $D_{c}$. As the
number of samples, $N$, go to infinity, it is well known that this
method converges at a rate of $\mathcal{O}(\sqrt{N})$. A quasi-Monte
Carlo method is formulated in the same way, except that instead of
using randomly generated points, a pre-determined set of points, called
a low-discrepancy sequence, is used in the estimation of the volume
$D_{c}$ \cite{fk2001mcqmcm}. The Quasi Monte Carlo method converges
at a rate of $\mathcal{O}(N)$. If one views a PDE from a numerical
Green's function formulation, low-discrepancy sequences are one approach
to numerically computing a solution in complex domains. These methods
have been used in the solution of integral equations \cite{cgh1998,ch2000},
and more recently, in the solution of PDE's \cite{ch2000}. The basic
premise is that a good estimate of the solution may be obtained with
relatively few samples, i.e., a sparse mesh.

Our proposed strategy differs from all the above frameworks in that
it uses sparse stochastic sampling to construct a non-uniform mesh-generation
function $Q$. The approach is ideal for identifying the placement
of mesh points to maintain uniform accuracy across the domain and
is well suited to both finite difference and finite element formulations
of a problem. Details of the approach and associated theoretical framework
are described in Section \ref{sec:Approach-and-Theoretical-Framework}
and include a preliminary investigation of convergence and well-posedness
issues. Numerical examples are then presented in Section \ref{sec:Numerical-Simulations}.
Section \ref{sec:Summary} contains a summary of our results as well
as a discussion of future directions. Lastly, since this work lies
at the intersection of numerical PDE's and statistics, some notation
may be unfamiliar to some readers and accordingly, we include a glossary
of notation in the Appendix.

\section{\label{sec:Approach-and-Theoretical-Framework}Approach and Theoretical
Framework}

Non-uniform discretizations can offer superior solution accuracy and
convergence properties. The challenge, however, lies in efficient
identification of a mesh which provides results superior to that of
uniform spacing. In this section, we offer a brief overview of our
proposed algorithm as well as the establishment of a preliminary theoretical
framework.

Consider a two-point Boundary Value Problem (BVP), e.g., the Poisson
equation \begin{eqnarray}
u''(x)=f(x)\:\mbox{ s.t.}\, & u(0)=A\,;\, & u(1)=B\,,\label{eq:poisson eq}\end{eqnarray}
for $A,B\in\mathbb{R}$ and $x\in\mathbf{I}=(0,1)$. Recall that a
uniform discretization for the second derivative yields second-order
convergence. We denote $\bar{\mathbf{I}}$ as the domain $[0,1]$,
a discretization of the interior $\mathbf{I}$ as $\mathbf{x}_{n}=(x_{1},\ldots,x_{n})^{T}$,
and the corresponding approximate solution at $\mathbf{x}_{n}$ as
$\mathbf{u}_{n}=(u_{1},\ldots,u_{n})^{T}$. Note that frequently in
the numerical analysis literature, the variables for spatial discretization
and numerical solution are denoted by a superscript $n$. Here $n$
is a subscript, which is consistent with notation for random variables
and will be used in this paper.

Next, consider a monotonically non-decreasing function $Q:\bar{\mathbf{I}}\to\bar{\mathbf{I}}$
which is absolutely continuous on a finite number of compact subsets
of $\bar{\mathbf{I}}$ and restricted at the endpoints to $Q(0)=0$,
$Q(1)=1$. The purpose of the function $Q$ is to map the uniformly
spaced mesh to a non-uniformly spaced one. We reserve $Q^{0}$ to
denote the identity mapping $Q^{0}(x)\equiv x$.

The goal is to develop a strategy for identifying a $Q$ such that,
e.g., the approximate solution to the Poisson problem \begin{eqnarray*}
u''(Q(x))=f(Q(x))\,\mbox{ s.t. }\, & u(Q(0))=A\,;\, & u(Q(1))=B\,,\end{eqnarray*}
has convergence properties (in $n$) superior to a uniform spacing.
We use a superscript $0$ to denote a uniform spacing or a variable
derived from uniform spacing, e.g., $\mathbf{x}_{n}^{0}=(\frac{1}{n+1},\ldots,\frac{n}{n+1})^{T}$,
$\mathbf{u}_{n}^{0}=(u_{1}^{0},u_{2}^{0},\ldots,u_{n}^{0})^{T}$,
$Q^{0}(\mathbf{x}_{n}^{0})\equiv\mathbf{x}_{n}^{0}$, etc. The core
of our approach is to identify $Q$ via a sparse stochastic approximation.
We repeatedly sample from a distribution $P$ and then use pointwise
statistical moments of the coarse solutions to generate the desired
non-uniform mesh function $Q$. D . Naturally, different classes of
problems call for different strategies for generating $Q$. Our results,
however, suggest that a more generalizable strategy may exist.

The rest of this section is devoted to establishing notation, framework,
and implementation details. We describe notation in Section \ref{sub:Notation}
and recall the relevant development of non-uniform finite difference
(FD) approximations in Section \ref{sub:Conventional-Non-uniform-Finite}.
In Section \ref{sub:Consistency-and-Stability}, we address how conventional
consistency and stability result can be cast in our framework. In
Section \ref{sub:Criterion-Motivation} we motivate the construction
of $Q$ as well as present the algorithms for the test cases in Section
\eqref{sec:Numerical-Simulations}. Lastly, Section \ref{sub:Sampling-Strategy}
is devoted to a description of the sampling strategy.

\subsection{\label{sub:Notation}Notation}

Let $\mathbb{X}_{n}=(X_{1},X_{2},\ldots,X_{n})$ be a random vector
where the elements are independent and identically distributed (i.i.d.)~with
$X_{k}\sim P$ for a given probability distribution $P$. The points
in the realizations must be sorted before being used to construct
an approximate solution. It is statistical convention to denote a
sorted sequence using subscripts with parenthesis. Accordingly, $\mathbb{X}_{(n)}=(X_{(1)},X_{(2)},\ldots,X_{(n)})^{T}$
denotes a \emph{sorted} random vector with elements from $\mathbb{X}_{n}$
where $X_{(k)}\sim\sum_{j=k}^{n}{n \choose j}P^{j}(1-P)^{k-j}$. The
derivation for this distribution comes from \emph{order statistics}
and we direct the interested reader to \cite{dn2003OrderStatistics}.
\begin{definition}
\label{def:U}Let $U:\bar{\mathbf{I}}^{n}\to\mathbb{R}^{n}$ be a
mapping taking a monotonically increasing sequence of $n$ distinct
points in $\bar{\mathbf{I}}$ and solving a given BVP on those points.
\end{definition}
Note that $U$ represents the action of numerically solving a PDE
and in some of the examples below, we discretize the second derivative
using a simple three-point stencil. In several cases, we implemented
higher order stencils (results not presented), and while they do provide
better convergence results, the conclusions of this work do not change.

\begin{definition}
\label{def:p}Let $p$ be a function taking a point $\xi\in\bar{\mathbf{I}}$
and a random vector of length $n$, and mapping them to a single random
variable \begin{eqnarray}
p(\xi,\mathbb{X}_{(n)}(P)) & \equiv & \mathbb{E}_{K}\left[\left\{ U(\mathbb{X}_{(n)}(P))\right\} _{K=k}\left|X_{(k)}=\xi\right.\right]\,.\label{eq:def:p}\end{eqnarray}

\end{definition}
The operator $\mathbb{E}_{K}$ denotes expectation with respect to
a uniform distribution on $\{1,\ldots,n\}$ where the distribution
of the index random variable $K$ and $\{\cdot\}_{K}$ denotes the
$K$th element of a vector. We note that this function returns a random
variable for each $\xi$.
\begin{definition}
\label{def:meanp}Let the pointwise mean of $p$ be defined for $\xi\in\bar{\mathbf{I}}$
as \begin{equation}
\mu(\xi)\equiv\mathbb{E}_{P}\left[\mathbb{E}_{K}\left[\left\{ U(\mathbb{X}_{(n)}(P))\right\} _{K=k}\left|X_{(k)}=\xi\right.\right]\right]\,.\label{eq:def:meanp}\end{equation}

\end{definition}
To appreciate the utility of this definition for $\mu$, consider
that this allows pointwise computation of the expected solution value
for an \emph{arbitrary point} $\xi\in\bar{\mathbf{I}}$ by conditioning
the expectation on $\xi$ being one of the points in a realized mesh.
The expectation on an index $K$ was constructed after much consideration
and it is productive to reflect on alternative choices. An alternative
definition of $p$ could include information (via an interpolation)
from solutions where $\xi$ is \emph{not} one of the mesh points.
We found, however, that this reduced the efficiency of our algorithm
both theoretically and practically. In addition to increasing the
complexity of the analysis, adding an interpolation step to the consistency
and stability results (Section \ref{sub:Consistency-and-Stability})
induced a reduction in the speed of solution convergence.
\begin{definition}
\label{def:varp}Let the pointwise variance of $p$ be defined for
$\xi\in\bar{\mathbf{I}}$ as \begin{equation}
v(\xi)\equiv\mathbb{V}_{P}\left[\mathbb{E}_{K}\left[\left\{ U(\mathbb{X}_{(n)}(P))\right\} _{K=k}\left|X_{(k)}=\xi\right.\right]\right]\,,\label{eq:def:varp}\end{equation}
where $\mathbb{V}_{P}$ denotes variance with respect to $P$, $\mathbb{E}_{K}$
denotes expectation with respect to $\mathcal{U}\{1,\ldots,n\}$,
the distribution of the index random variable $K$, and $\{\cdot\}_{K}$
denotes the $K$th element of a vector. We also let the average variance
over $\bar{\mathbf{I}}$ be defined as \begin{equation}
\bar{v}\equiv\frac{1}{\left\Vert \bar{\mathbf{I}}\right\Vert }\int_{\bar{\mathbf{I}}}\mathbb{V}_{P}\left[\mathbb{E}_{K}\left[\left\{ U(\mathbb{X}_{(n)}(P))\right\} _{K=k}\left|X_{(k)}=\xi\right.\right]\right]d\xi\,.\label{eq:def:varpbar}\end{equation}

\end{definition}
In the examples presented in Section \ref{sec:Numerical-Simulations},
we choose $P$ to be a uniform distribution on $\mathbf{I}$. There
is nothing in the following development, however, that would fundamentally
change for an arbitrary $P$.

\subsection{\label{sub:Conventional-Non-uniform-Finite}Conventional Non-uniform
Finite Difference}

In the example problems of Section \ref{sec:Numerical-Simulations},
we will employ finite differences. There are first and second derivatives
in these problems, which must be discretized on a nonuniform mesh.
For the first derivative, we select the simple (though unstable) centered
difference discretization. For the second derivative, however, we
create a non-uniform centered difference approximation to the differential
operator. For uniform mesh $\mathbf{x}_{n}^{0}$ and any non-uniform
mesh function $Q$, we can Taylor expand the solution $u$ at mesh-points
$k+1$ and $k-1$ around node $k$\begin{eqnarray*}
u_{k+1} & = & u_{k}+h_{k+1}u'_{k}+\frac{1}{2}h_{k-1}^{2}u''_{k}+\mathcal{O}(h_{k-1}^{3})\,,\\
u_{k-1} & = & u_{k}-h_{k}u'_{k}+\frac{1}{2}h_{k}^{2}u''_{k}-\mathcal{O}(h_{k}^{3})\,,\end{eqnarray*}
where $h_{k}=Q(x_{k}^{0})-Q(x_{k-1}^{0})$ is the step-size and $u_{k}$
is the solution evaluated at $Q(x_{k}^{0})$. This leads to a Method
of Undetermined Coefficients problem\begin{eqnarray*}
a_{k}u_{k+1}+b_{k}u_{k}+c_{k}u_{k-1} & = & (a_{k}+b_{k}+c_{k})u_{k}+(a_{k}h_{k+1}-c_{k}h_{k})u'_{k}\\
 &  & \qquad+\frac{1}{2}(a_{k}h_{k+1}^{2}+c_{k}h_{k}^{2})u''_{k}\,,\end{eqnarray*}
with the algebraic equations\begin{eqnarray*}
a_{k}+b_{k}+c_{k}=0\,,\quad & a_{k}h_{k+1}-c_{k}h_{k}=0\,,\quad & \frac{1}{2}(a_{k}h_{k+1}^{2}+c_{k}h_{k}^{2})=1\,.\end{eqnarray*}
The values of $a_{k}$, $b_{k}$, and $c_{k}$, yield the discretized
version of \eqref{eq:poisson eq}, $A_{Q(\mathbf{x}_{n}^{0})}\mathbf{u}_{n}=f_{Q(\mathbf{x}_{n}^{0})}$,
where $A_{Q(\mathbf{x}_{n}^{0})}$ is a tridiagonal matrix \begin{equation}
2\left[\begin{array}{ccccc}
-\frac{1}{h_{1}h_{2}} & \frac{1}{h_{2}(h_{1}+h_{2})} & 0 & \cdots & 0\\
\frac{1}{h_{2}(h_{2}+h_{3})} & -\frac{1}{h_{2}h_{3}} & \frac{1}{h_{3}(h_{2}+h_{3})} & \ddots & \vdots\\
0 & \frac{1}{h_{3}(h_{3}+h_{4})} & \ddots & \ddots & 0\\
\vdots & \ddots & \ddots & -\frac{1}{h_{n-1}h_{n}} & \frac{1}{h_{n}(h_{n-1}+h_{n})}\\
0 & \cdots & 0 & \frac{1}{h_{n}(h_{n}+h_{n+1})} & -\frac{1}{h_{n}h_{n+1}}\end{array}\right]\,,\label{eq:AQxn}\end{equation}
$f_{Q(\mathbf{x}_{n}^{0})}$ is the forcing function is evaluated
at the $n$ mesh-points, and $\mathbf{u}_{n}$ is the approximate
solution at the $Q(\mathbf{x}_{n}^{0})$ mesh-points. Relating this
to the notation above, we have that $\mathbf{u}_{n}=U(Q(\mathbf{x}_{n}^{0}))=A_{Q(\mathbf{x}_{n}^{0})}^{-1}f_{Q(\mathbf{x}_{n}^{0})}$.

We also note that in what follows, we consider a randomly sampled
and sorted mesh $\mathbb{X}_{(n)}(P)$ and the corresponding approximate
differential operator is denoted as $A_{\mathbb{X}_{(n)}(P)}$.

\subsection{\label{sub:Consistency-and-Stability}Consistency and Stability}

For a conventional finite difference discretization, we would consider
the error $E$ in the solution\begin{eqnarray*}
\left\Vert E(Q,\mathbf{x}_{n}^{0})\right\Vert  & = & \left\Vert u(Q(\mathbf{x}_{n}^{0}))-U(Q(\mathbf{x}_{n}^{0}))\right\Vert \\
 & = & \left\Vert A_{Q(\mathbf{x}_{n}^{0})}^{-1}\left(A_{Q(\mathbf{x}_{n}^{0})}u(Q(\mathbf{x}_{n}^{0}))-f_{Q(\mathbf{x}_{n}^{0})}\right)\right\Vert \\
 & \leq & \left\Vert A_{Q(\mathbf{x}_{n}^{0})}^{-1}\right\Vert \left\Vert \tau_{Q(\mathbf{x}_{n}^{0})}\right\Vert \,,\end{eqnarray*}
which is bounded above by the spectral radius of the inverse of the
finite difference operator $A_{Q(\mathbf{x}_{n}^{0})}^{-1}$ and a
truncation error $\tau_{Q(\mathbf{x}_{n}^{0})}$. For the non-uniform
three-point-stencil approximating the second derivative, the truncation
error is $\mathcal{O}(\max_{k}|h_{k}|)$.

For our development, however, we consider a probabilistic version
of this error, with the following conditions.

\noun{Condition C1. }\emph{For a given $P$, the spectrum of $A_{\mathbb{X}_{(n)}(P)}^{-1}$
is bounded in $[0,1]$.}

\noun{Condition C2. }\emph{For a given $P$, the truncation error
induced by a finite difference approximation to the second derivative
is first order in the largest step-size $h$.}

Moreover, we go so far as to propose that Conditions C1 and C2 are
in fact true for all classes of BVP's. In support of Condition C1,
the red circles in Figure \ref{fig:Upper-and-lower} depict the maximum
and minimum eigenvalue magnitudes for $m=12000$ realizations of $\mathbb{X}_{(n)}(P)$
for $P$ a uniform distribution on $\bar{\mathbf{I}}$. For comparison,
we have included the eigenvalue bounds for a uniform partition (blue
squares). In support of Condition C2, the red circles in Figure \ref{fig:truncation-dist}
depict (for each $n$) the maximum step-size in 1000 meshes sampled
from $P$, a uniform distribution on $\bar{\mathbf{I}}$. This illustrates
that as $n$ increases, the maximum step-size is decreasing as well,
albeit slowly in comparison with step-size in a uniform partition
(blue squares).%
\begin{figure}
\centering{}\includegraphics[width=0.45\columnwidth]{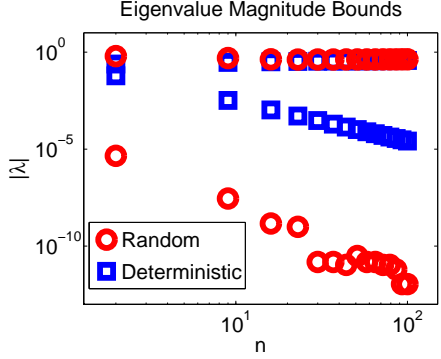}\caption{\label{fig:Upper-and-lower}Red circles are the magnitude of upper
and lower bounds for eigenvalues of $A_{\mathbb{X}_{(n)}(P)}^{-1}$.
For each $n$, $m=12000$ random grids were sampled from a uniform
distribution and eigenvalues of the corresponding $A_{\mathbb{X}_{(n)}(P)}^{-1}$
were computed. For comparison, the blue squares are the magnitudes
of the upper and lower bounds for the eigenvalues of $A_{Q^{0}(\mathbf{x}_{n}^{0})}^{-1}$.}

\end{figure}
\begin{figure}
\centering{}\includegraphics[width=0.45\columnwidth]{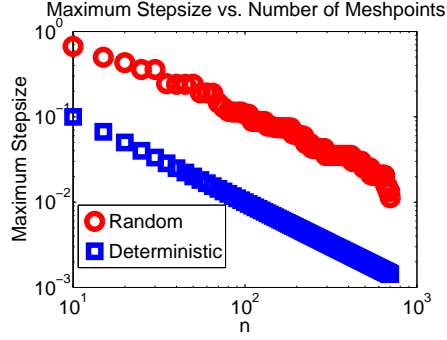}\caption{\label{fig:truncation-dist}Red circles illustrate the convergence
of the largest step-size in randomly generated meshes of length $n$.
For each $n$, $m=1000$ meshes $\mathbb{X}_{(n)}(P)$ were generated
for $P$ a uniform distribution on $\bar{\mathbf{I}}$. The vertical
axis is the maximum step-size between and two nearby points in $\mathbb{X}_{(n)}(P)$.
For comparison, we also include the step-sizes for a uniform partition
of the domain (blue squares).}

\end{figure}

\begin{theorem}
\label{thm:ptwise-conv}Under Condition C1 and C2, the expected error
converges pointwise to zero.\end{theorem}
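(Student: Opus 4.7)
The plan is to interpret the theorem as the statement that $\mu(\xi)\to u(\xi)$ as $n\to\infty$ for each $\xi\in\bar{\mathbf{I}}$, where $\mu$ is the pointwise mean of Definition \ref{def:meanp}. The natural route is to push the standard deterministic error bound sketched at the start of Section \ref{sub:Consistency-and-Stability} inside the conditional expectation that defines $\mu$, and then show that what is left on the right-hand side tends to zero in expectation.

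First I would fix $\xi\in\bar{\mathbf{I}}$ and condition on the event $X_{(k)}=\xi$ for a realization $\mathbb{X}_{(n)}(P)$. Conditional on this event, $U(\mathbb{X}_{(n)}(P))$ is a deterministic finite-difference solution on a fixed mesh that passes through $\xi$, and the standard identity gives
\begin{equation*}
u(\xi)-\{U(\mathbb{X}_{(n)}(P))\}_{k}=\bigl[A_{\mathbb{X}_{(n)}(P)}^{-1}\tau_{\mathbb{X}_{(n)}(P)}\bigr]_{k},
\end{equation*}
so that by Condition C1 the operator norm $\|A_{\mathbb{X}_{(n)}(P)}^{-1}\|\le 1$ and by Condition C2 the truncation error satisfies $\|\tau_{\mathbb{X}_{(n)}(P)}\|_{\infty}\le C\,h_{\max}(\mathbb{X}_{(n)}(P))$, where $h_{\max}$ denotes the largest gap between successive order statistics. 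Thus for every $k$ and every realization,
\begin{equation*}
\bigl|u(\xi)-\{U(\mathbb{X}_{(n)}(P))\}_{k}\bigr|\le C\,h_{\max}(\mathbb{X}_{(n)}(P)).
\end{equation*}

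Next I would take the conditional expectation over $K$ (which just averages the same bound over indices) and then the outer expectation over $P$. Subtracting $\mu(\xi)$ and using Jensen's inequality inside both layers of expectation,
\begin{equation*}
\bigl|u(\xi)-\mu(\xi)\bigr|\le \mathbb{E}_{P}\!\left[\mathbb{E}_{K}\!\left[\bigl|u(\xi)-\{U(\mathbb{X}_{(n)}(P))\}_{K}\bigr|\,\bigl|\,X_{(K)}=\xi\right]\right]\le C\,\mathbb{E}_{P}[h_{\max}(\mathbb{X}_{(n)}(P))].
\end{equation*}
The right-hand side is independent of $\xi$, so pointwise convergence will follow at once from $\mathbb{E}_{P}[h_{\max}]\to 0$.

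The main obstacle, and the only nontrivial analytic input, is the last step: controlling the expected maximum spacing of $n$ i.i.d.\ samples from $P$. For $P$ uniform on $\bar{\mathbf{I}}$, which is the case used in the numerical examples and in Figure \ref{fig:truncation-dist}, classical order-statistics results give $\mathbb{E}[h_{\max}]=O(\log n/n)$, and the figure is precisely the empirical confirmation of this decay. For a general absolutely continuous $P$ with density bounded away from zero on its support one has the same qualitative rate via a standard probability-integral-transform reduction to the uniform case; this is the place where one must be slightly careful, since the hypothesis of C2 (``first order in the largest step-size'') tacitly presumes that $h_{\max}$ is itself going to zero, and that needs to be justified probabilistically rather than deterministically. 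Once $\mathbb{E}_{P}[h_{\max}]\to 0$ is established, combining it with the displayed bound completes the proof.
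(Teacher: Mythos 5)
Your proposal follows essentially the same route as the paper's proof: condition on $X_{(K)}=\xi$, write the pointwise error as $A_{\mathbb{X}_{(n)}(P)}^{-1}\tau_{\mathbb{X}_{(n)}(P)}$, bound the operator norm via Condition C1 and the truncation error via Condition C2, and pass the bound through $\mathbb{E}_K$ and $\mathbb{E}_P$ with Jensen's inequality. The one substantive place where you go beyond the paper is the final step: the paper stops at ``the truncation error converges as $\mathcal{O}(\max_i|X_{(i)}-X_{(i+1)}|)$'' and points to Figure \ref{fig:truncation-dist} for empirical support, whereas you correctly identify that the argument is incomplete without a proof that $\mathbb{E}_P[h_{\max}]\to 0$, and you supply the classical order-statistics bound $\mathbb{E}[h_{\max}]=\mathcal{O}(\log n/n)$ for uniform samples (with the probability-integral-transform reduction for more general $P$). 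That addition is valuable and closes a gap the paper leaves open; the only remaining loose end, which both you and the paper share, is that the relevant quantity is the expectation of $h_{\max}$ under the distribution conditioned on $X_{(K)}=\xi$, so one should check the $\mathcal{O}(\log n/n)$ rate survives that conditioning uniformly in $\xi$ (it does, but it deserves a sentence).
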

\begin{proof}
Let $e_{k}$ be a vector of zeros with a one in the $k$th element
and $\mathbb{X}_{(n),\xi,k}$ denote the vector $(X_{(1)},\ldots,X_{(k-1)},\xi,X_{(k+1)},\ldots,X_{(n)})$
where the $k$th element of $\mathbb{X}_{(n)}$ is replaced by $\xi$.
Consider the pointwise error between the the analytical solution $u$
and mean of the coarse-mesh solutions $\mu$ \begin{eqnarray*}
\left\Vert u(\xi)-\mu(\xi;U(\mathbb{X}_{(n)}(P)))\right\Vert  & = & \left\Vert u(\xi)-\mathbb{E}_{P}\left[\mathbb{E}_{K}\left[\{U(\mathbb{X}_{(n)}(P))\}_{K}\left|K:X_{(K)}=\xi\right.\right]\right]\right\Vert \\
 & = & \left\Vert \mathbb{E}_{P}\left[\mathbb{E}_{K}\left[u(\xi)-\{U(\mathbb{X}_{(n)}(P))\}_{K}\left|K:X_{(K)}=\xi\right.\right]\right]\right\Vert \\
 & = & \left\Vert \mathbb{E}_{P}\left[\mathbb{E}_{K}\left[u(\xi)-e_{K}^{T}(U(\mathbb{X}_{(n)}(P))\left|K:X_{(K)}=\xi\right.\right]\right]\right\Vert \\
 & \leq & \mathbb{E}_{P}\left[\mathbb{E}_{K}\left[\left\Vert e_{K}^{T}(u(\mathbb{X}_{(n),\xi,K})-U(\mathbb{X}_{(n)}(P))\right\Vert \left|K:X_{(K)}=\xi\right.\right]\right]\\
 & \leq & \mathbb{E}_{P}\left[\left\Vert A_{\mathbb{X}_{(n)}(P)}^{-1}\right\Vert \cdot\mathbb{E}_{K}\left[\left\Vert A_{\mathbb{X}_{(n)}(P)}u(\mathbb{X}_{(n),\xi,K})-f_{\mathbb{X}_{(n)}}\right\Vert \left|K:X_{(K)}=\xi\right.\right]\right]\,.\end{eqnarray*}
We note that $\left\Vert A_{\mathbb{X}_{(n)}(P)}u(\mathbb{X}_{(n),\xi,k})-f_{\mathbb{X}_{(n)}}\right\Vert $
is the truncation error induced from a 3-point non-uniform centered-difference
stencil and converge as $\mathcal{O}(\max_{i=1,\ldots,n-1}|X_{(i)}-X_{(i+1)}|)$
(Figure \ref{fig:truncation-dist} also depicts this behavior). Therefore,
assuming Conditions C1 and C2, we can conclude that the error converges
pointwise.
\end{proof}

\subsection{\label{sub:Criterion-Motivation}Criteria Motivation}

In Section \ref{sec:Numerical-Simulations}, we consider problems
containing second derivatives as well as nonlinear functions of first
derivatives. We create $Q$ depending upon the specific problem class
under consideration. In one case, $Q$ is based upon the derivative
of the mean of the solutions $\mu$, while in another we consider
the product of the variance and the the second derivative of $\mu$.
In all cases, however, $Q$ is created using the statistical moments
of the sampled sparse-mesh solutions and the results suggest that
a more generalizable strategy may exist.

For the problems with second derivatives (equation \eqref{sub:Singular-Boundary-Value}
in Subsection \ref{sub:Singular-Boundary-Value} and equation \eqref{eq:leveque equation}
in Subsection \ref{sub:Oscillatory-Boundary-Value}) we define $Q$
as \[
Q(x)=\left[\frac{q_{1}(\cdot)}{q_{1}(1)}\right]^{-1}(x)\,,\]
where \[
q_{1}(x)=\int_{0}^{x}\sqrt{\left|\mu'(\xi;U(\mathbb{X}_{(n)}(P))\right|}d\xi\,,\]
and the superscript $-1$ is an inverse function operator. Essentially,
this definition will pile up points in regions with a steep solution
in an effort to provide higher order accuracy for the nonuniform second
derivative discretization.

For the problem with a third power of the first derivative (equation
\eqref{eq:HamJac eqn} in Subsection \ref{sub:Hamilton-Jacobi-Equation}),
we define $Q$ as \[
Q(x)=\left[\frac{q_{2}(\cdot)}{q_{2}(1)}\right]^{-1}(x)\,,\]
where \[
q_{2}(x)=\int_{0}^{x}\mu''(\xi;U(\mathbb{X}_{(n)}(P))^{2}\, v(\xi;U(\mathbb{X}_{(n)}(P))^{3}d\xi\,,\]
and $v$ is from Definition \eqref{def:varp}.

To motivate these mappings, consider a least squares cost function
$J:\mathbb{R}\to\mathbb{R}$ which compares (pointwise) a proposed
solution to the actual solution $u$ at an arbitrary point $\xi\in\bar{\mathbf{I}}$.
We then consider the random variable $J(\mu(\xi,\mathbb{X}_{(n)}(P))$
and Taylor series expand the expected value of $J$ around a local
minimum $\mu^{*}(\xi)$ for some $\xi\in\mathbf{I}$ \[
\mathbb{E}_{P}\left[J(\mu(\xi,\mathbb{X}_{(n)}(P))\right]\approx\mathbb{E}_{P}\left[J(\mu^{*}(\xi))+J_{1}+J_{2}\right]\]
where \begin{eqnarray*}
J_{1} & = & J'(\mu^{*}(\xi))(\mu(\xi,\mathbb{X}_{(n)}(P))-\bar{\mu}^{*}(\xi))\end{eqnarray*}
and \begin{eqnarray*}
J_{2} & = & J''(\mu^{*}(\xi))\left(\mu(\xi,\mathbb{X}_{(n)}(P))-\mu^{*}(\xi)\right)^{2}\,.\end{eqnarray*}
Note that since $\mu^{*}$ is assumed to be a local minimum for $J$
, $J_{1}\equiv0$ and that in $J_{2}$, $J''(\mu^{*}(\xi))=1$. The
expected cost function can thus be approximated by\begin{equation}
\begin{array}{l}
\mathbb{E}_{P}\left[J(\mu(\xi,\mathbb{X}_{(n)}(P)))\right]\approx\mathbb{E}_{P}\left[J(\mu^{*}(\xi))\right]+\mathbb{E}_{P}\left[\left(\mu(\xi,\mathbb{X}_{(n)}(P))-\mu^{*}(\xi)\right)^{2}\right]\,.\end{array}\label{eq:expJ with var}\end{equation}
According to Theorem \ref{thm:ptwise-conv}, $\lim_{n\to\infty}\mathbb{E}_{P}\left[J(\mu^{*}(\xi))\right]=0$
pointwise. Most importantly, the second term in \eqref{eq:expJ with var}
is the variance of $\mu(\xi,\mathbb{X}_{(n)}(P))$. The moments of
the sparse sampled solutions, therefore, can offer insight into the
actual solution.

We hypothesize that the reason $q_{1}(x)$ works well is that the
$\mu'$ may converge faster than $\mu$. A full investigation will
require establishing the appropriate Sobolev space, though it is not
immediately clear how to proceed. 

We also hypothesize that the function $q_{2}(x)$ works well because
the second derivative (when cast as the local curvature) is inversely
proportional to the local variance of a random variable (a result
which is well known in the semi-parametric nonlinear regression literature
\cite{sw1989}). Essentially, while the $\mu''$ may not converge
quickly, the product $\mu''v$ does. We also found that multiplication
by an extra $v$ dramatically improves the computed $Q$, though an
explanation is not immediately clear. A deeper understanding of the
spectrum of $A_{\mathbb{X}_{(n)}(P)}$ and how it depends upon the
choice of $P$ will be essential to explaining the efficiency of $q_{2}(x)$.
We plan to explore both of these issues in a future paper.

\subsection{\label{sub:Sampling-Strategy}Sampling Strategy}

The proposed methodology relies on random sampling and sorting a sampled
vector from a distribution. For standard sampling problems, the confidence
intervals for variance estimation converge as $\mathcal{O}(\sqrt{m})$
for $m$ samples. In the absence of a full theoretical analysis, we
offer numerical evidence supporting our accurate computation of the
variance. Note that in all the following simulations, we use the first
example problem presented in Section \ref{sec:Numerical-Simulations},
the singular two-point BVP \eqref{eq:singularBVP eqn}-\eqref{eq:singularBVP BC}.

The relationship between the mesh size $n$ and the number of samples
$m$ is non-trivial. and Figure \ref{fig:varmnSingularBVP} illustrates
this by depicting the error in $\bar{v}$ (relative to $\bar{v}$
computed with $m=3000$ sampled from a uniform distribution on $\bar{\mathbf{I}}$)
for a range of $n$ and $m$ values. For a given $n$, though, we
do note that the error in the $\bar{v}$ computation is decreasing.
In Figure \ref{fig:neededSampesSingularBVP} we depict the number
of samples of vector size $n$ which are needed to ensure three digits
of accuracy in estimating the variance. Since the number was consistently
below $1000$ over a range of $n$, we let $m=15000$ in all subsequent
simulations (unless otherwise specified). %
\begin{figure}
\centering{}\includegraphics[width=0.45\columnwidth]{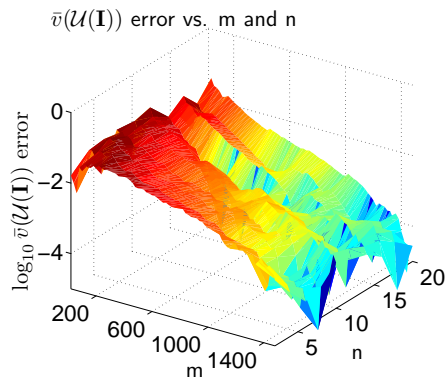}\caption{\label{fig:varmnSingularBVP}$\log_{10}$ of the error in the computation
of $\bar{v}$ (sampling from a uniform distribution on $\bar{\mathbf{I}}$)
as a function of $m$ and $n$. Note the general downward trend along
both the $m$ and $n$ axes.}

\end{figure}
\begin{figure}
\centering{}\includegraphics[width=0.45\columnwidth]{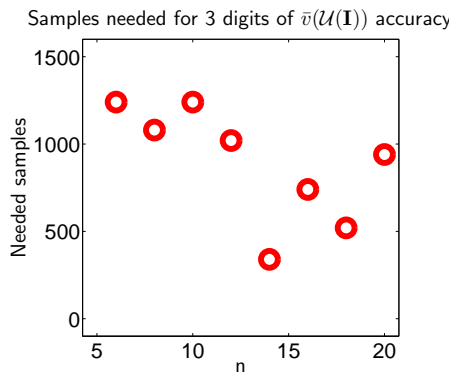}\caption{\label{fig:neededSampesSingularBVP}For each $n,$ the vertical axis
reflects the number of samples needed to compute the variance with
3 digits of accuracy relative to $\bar{v}$ (sampling from uniform
distribution on $\bar{\mathbf{I}}$) with $m=3000$.}

\end{figure}

We also note that figures with similar conclusions can be generated
for the other problems described in Section \ref{sec:Numerical-Simulations}.

\section{\label{sec:Numerical-Simulations}Numerical Simulations}

In this section, we consider three classes of BVP's and illustrate
the effectiveness of our approach in identifying a non-uniform mesh
spacing which yields a solution accuracy superior to that of uniform
spacing. The examples in Sections \ref{sub:Singular-Boundary-Value}-\ref{sub:Hamilton-Jacobi-Equation}
were chosen because the solutions exhibited changes in smoothness
such as boundary layers and a discontinuity in the derivative, i.e.,
a local effect. The example in Section \ref{sub:Oscillatory-Boundary-Value}
was chosen because the solutions exhibited varying multi-scale oscillations
over the domain, i.e., a global effect.

All software development was done in Matlab and in particular, parallel
for-loops (\textbf{parfor}) were used whenever possible. For all simulations,
we let $m=15000$ unless otherwise specified. A copy of the software
is available upon request to the corresponding author.

\subsection{\label{sub:Singular-Boundary-Value}Singular Boundary Value Problem}

The following singular two-point Boundary Value Problem\begin{eqnarray}
(x^{\alpha}u')'=f(x,u) & \, & ;\,0\leq x\leq1\label{eq:singularBVP eqn}\\
u(0)=A & \, & ;\, u(1)=B\label{eq:singularBVP BC}\end{eqnarray}
encompasses a large class of singular BVP's where $\alpha\in(0,1)$
and $A$, $B$ are finite and constant. For $(x,y)\in\{[0,1]\times\mathbb{R}\}$
such that $f(x,y)$ is continuous, $\partial f/\partial x$ exists
and is continuous and $\partial f/\partial y\geq0$. Many researchers
have successfully implemented strategies to solve classes of singular
BVP's using uniform finite difference \cite{jamet1970nummath}, Rayleigh-Ritz-Galerkin
\cite{cnv1970nummath}, projection \cite{reddien1973nummath}, and
collocation \cite{rs1976nummath} schemes.

We consider the specific problem presented in \cite{ak2001jcompappmath}
for $f(x,u)=\beta x^{\alpha+\beta-2}((\alpha+\beta-1)+\beta x^{\beta})u$,
$A=1$, $B=e$ and with exact solution $u(x)=\exp(x^{\beta})$. Depicted
in Figure \eqref{fig:sBVP-soln} is the solution for $\alpha=.85$,
and $\beta=10$; note the strong boundary layer affect near $x=1$.
\begin{figure}
\centering{}\includegraphics[width=0.45\columnwidth]{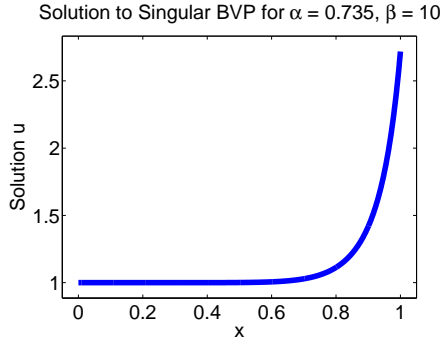}\caption{\label{fig:sBVP-soln}Analytical solution to singular two-point Boundary
Value Problem in \eqref{eq:singularBVP eqn}-\eqref{eq:singularBVP BC}
for $f(x,u)=\beta x^{\alpha+\beta-2}((\alpha+\beta-1)+\beta x^{\beta})u$,
$u(0)=1$, and $u(1)=e$. }

\end{figure}

Inspired by the fact that a non-uniform mesh provides superior accuracy
for this class of problems (as investigated in \cite{ak2001jcompappmath,ck1982nummath}),
we consider the mapping $Q_{\alpha}(x)=x^{1-\alpha}$. We choose $\alpha=0.735$,
$Q_{0.735}(x)=x^{1-0.735}$ because for $n=100$, this $Q_{0.735}$
yields 2.4 orders of magnitude more accuracy in the solution than
$Q^{0}$. The dotted curve in Figure \ref{fig:sBVP-P-comparisons}
depicts the corresponding plot of $Q_{0.735}$ and should thus be
considered a \emph{gold standard}, with which we will compare our
computed mapping.

For this problem, we first consider $m=15000$ samples of length $n=10$
and solve \eqref{eq:singularBVP eqn}-\eqref{eq:singularBVP BC}.
Figure \ref{fig:sBVP-pdf-for-unif} depicts a contour plot of the
probability density of $p(\xi;\mathbb{X}_{(n)}(P))$, where $P$ is
a uniform distribution on $\bar{\mathbf{I}}$. We note that many of
the sparse solutions have relatively large values for $x$ near $3/4$,
but we are at a loss to explain this phenomenon. Figure \ref{fig:sBVP-P-comparisons}
depicts our computed $Q$ along with the uniform and analytically
optimal mappings and Figure \ref{fig:sBVP-conv} depicts the error
convergence properties for $Q$ and $Q^{0}$. We draw attention to
the fact that for mesh sizes less than $100$, our mapping results
in second order convergence, even though the discretization is first
order. Indeed, the error is at times, marginally better than $Q_{\alpha}$.
We also note that while $Q^{0}$ and $Q_{\alpha}$ are second order
accurate, our non-uniform mapping asymptotically converges with first-order
accuracy. Lastly, Figure \ref{fig:sBVP-Solution-comparison}a depicts
solutions using $n=20$ mesh-points from the uniform and computed
optimal mappings. The uniform mesh creates a solution which is consistently
above the analytical solution, while the solution computed from $Q$
is substantially more accurate for small numbers of mesh points. While
the difference between the solution is visually small, it is almost
an order of magnitude more accurate and Figure \ref{fig:sBVP-Solution-comparison}b
depicts the solution residuals. %
\begin{figure}
\centering{}\includegraphics[width=0.5\columnwidth]{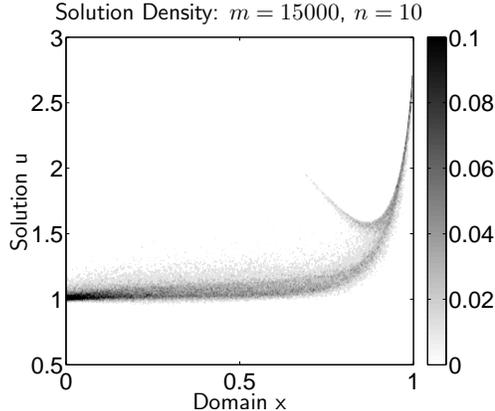}\caption{\label{fig:sBVP-pdf-for-unif}Contour plot of the probability density
of $p(x,\mathbb{X}_{(n)}(P))$ for $m=15000$ sampled meshes of length
$n=10$ from mesh mapping $Q^{0}$ and $P$ a uniform distribution
on $\bar{\mathbf{I}}$.}

\end{figure}
\begin{figure}
\centering{}\includegraphics[width=0.45\columnwidth]{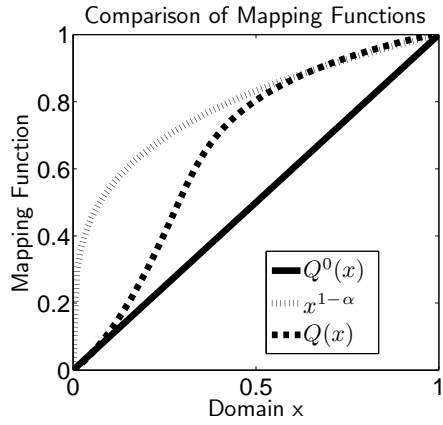}\caption{\label{fig:sBVP-P-comparisons}Comparison of different mesh mappings.
The solid line is uniform spacing $Q^{0}(x)$, the dotted line is
the analytically optimal non-uniform spacing $x^{(1-\alpha)}$, and
dot-dashed line with the circles is the computed $Q(x)$.}

\end{figure}
\begin{figure}
\centering{}\includegraphics[width=0.45\columnwidth]{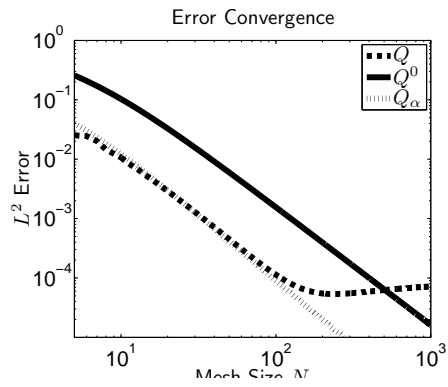}\caption{\label{fig:sBVP-conv}Error convergence of the different mesh mappings.}

\end{figure}
\begin{figure}
\centering{}a)\includegraphics[width=0.45\columnwidth]{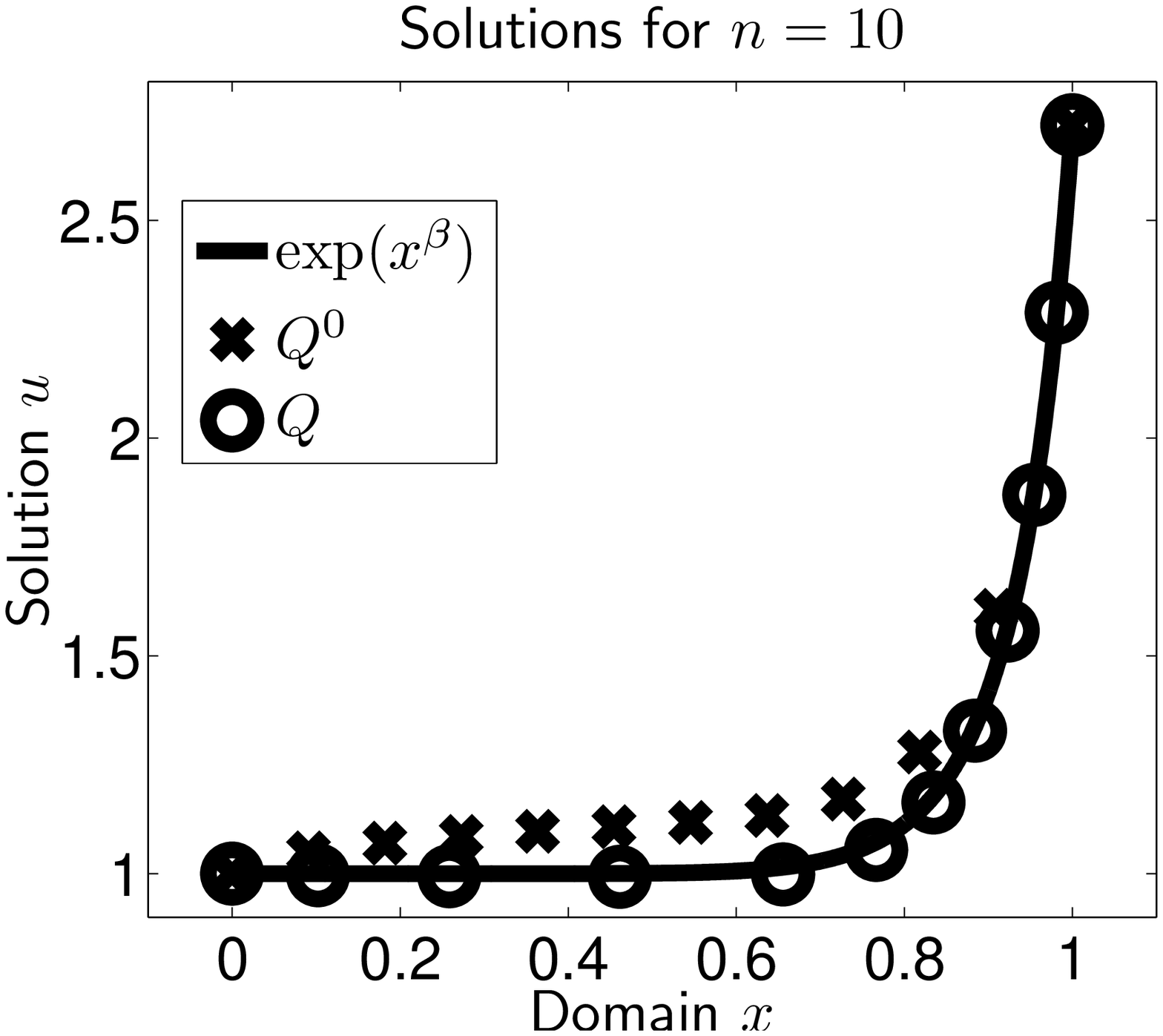}\hfill{}b)\includegraphics[width=0.45\columnwidth]{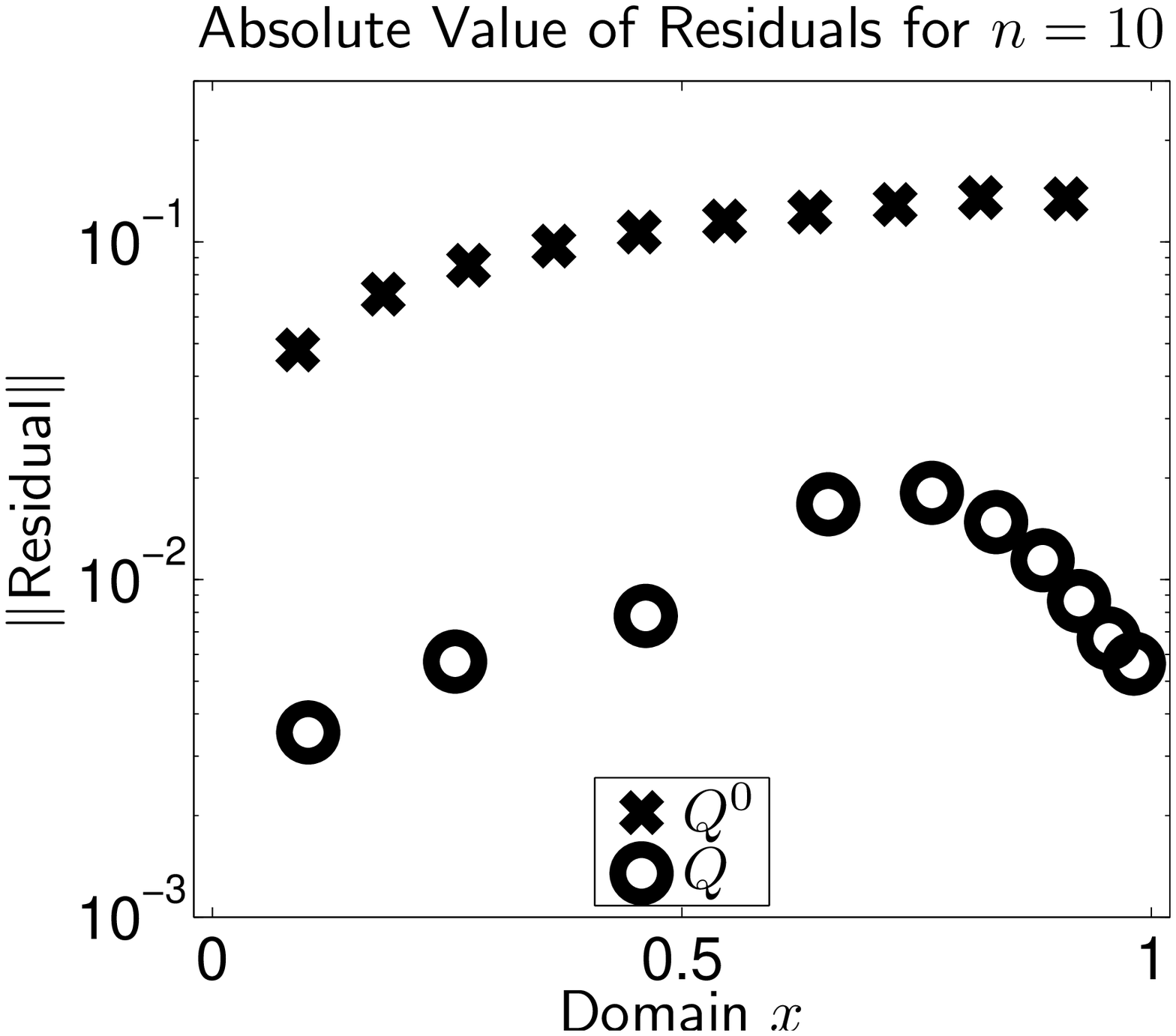}\caption{\label{fig:sBVP-Solution-comparison}a) Solution using $n=10$ mesh
points. The solid line is the analytical solution. The 'X' and 'O'
symbols are the solutions generated using the uniform ($Q^{0}$) and
computed ($Q$) mesh point mappings. b) The absolute value of the
pointwise residuals for both solutions.}

\end{figure}

\subsection{\label{sub:Hamilton-Jacobi-Equation}Hamilton-Jacobi Equation}

An important example problem which may develop a singularity at an
internal location is the time-invariant Hamilton-Jacobi (HJ) equation,\begin{eqnarray}
u+H(u_{x})=f & \, & ;\,0\leq x\leq1,\label{eq:HamJac eqn}\\
u(0) & = & u(1)\label{eq:HamJac BC}\end{eqnarray}
where we assume period Dirichlet boundary conditions. We consider
\begin{equation}
H(p)=\frac{p^{2}}{\pi^{2}}\;,\;\; f(x)=-\left|\sin(\pi(x-\frac{\pi}{4}))\right|+\cos^{2}(\pi(x-\frac{\pi}{4}))\,,\label{eq:HamJac nonsmooth f}\end{equation}
which yields a solution $u(x)=-|\sin\left(\pi\left(x-\frac{\pi}{4}\right)\right)|$
with a discontinuity in the derivative at $\frac{\pi}{4}$. The analytical
solution on $\bar{\mathbf{I}}$ is depicted in Figure \ref{fig:HamJac-soln}.
\begin{figure}
\centering{}\includegraphics[width=0.45\columnwidth]{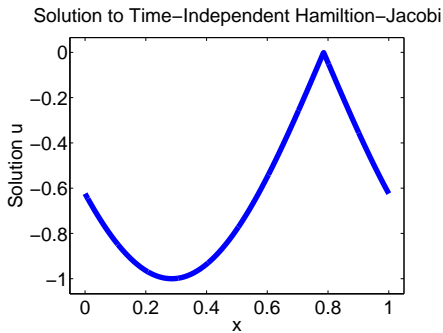}\caption{\label{fig:HamJac-soln}Analytical solution to time-independent Hamilton-Jacobi
equation in \eqref{eq:HamJac eqn}-\eqref{eq:HamJac BC} for $H(p)=\frac{p^{2}}{\pi^{2}}$
and $f(x)=-\left|\sin(\pi(x-\frac{\pi}{4}))\right|+\cos^{2}(\pi(x-\frac{\pi}{4}))$.}

\end{figure}

For this problem, we first consider $m=15000$ samples of length $n=10$
and solve \eqref{eq:HamJac eqn}-\eqref{eq:HamJac nonsmooth f}. We
discretized $u_{x}$ using centered differences and then solved this
nonlinear PDE by minimizing the norm of the residual ($u+H(u_{x})-f$)
using a Quasi-Newton method with BFGS update. Figure \ref{fig:HamJac-pdf-for-unif}
depicts the contour plot of the probability density of $p(\xi;\mathbb{X}_{(n)}(P))$
where $P$ is a uniform distribution on $\mathbf{I}$. %
\begin{figure}
\centering{}\includegraphics[width=0.5\columnwidth]{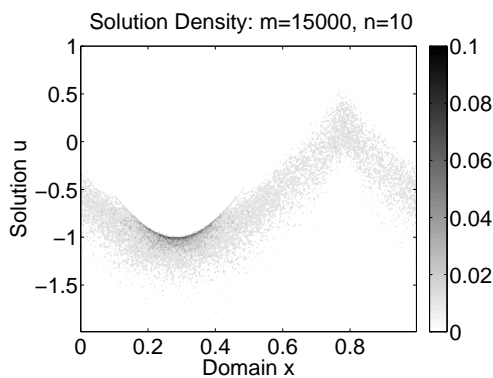}\caption{\label{fig:HamJac-pdf-for-unif}Contour plot of the probability density
of $p(x,\mathbb{X}_{(n)}(P))$ for $m=15000$ sampled meshes of length
$n=10$ for Hamilton-Jacobi problem in \eqref{eq:HamJac eqn}-\eqref{eq:HamJac nonsmooth f},
where $P$ is a uniform distribution on $\bar{\mathbf{I}}$.}

\end{figure}
\begin{figure}
\centering{}\includegraphics[width=0.5\columnwidth]{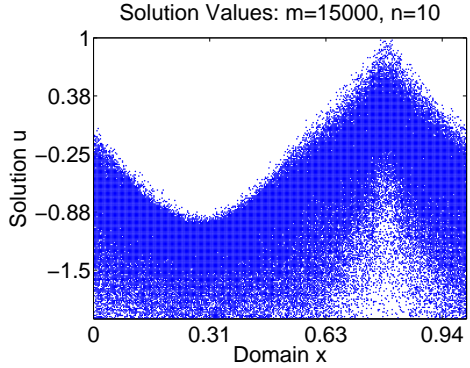}\caption{\label{fig:HamJac-point-cloud}Point cloud of solution values for
$m=15000$ sampled meshes of length $n=10$ for Hamilton-Jacobi problem
in \eqref{eq:HamJac eqn}-\eqref{eq:HamJac nonsmooth f}.}

\end{figure}

Figure \ref{fig:HamJac-comparison-P} depicts our computed optimal
$Q$ along with the uniform and analytically optimal mappings while
Figure \ref{fig:HamJac-error-conv} depicts the convergence properties
for the different mappings. We note that computed mapping will result
in many mesh-points near $\pi/4$, which is exactly the location of
the derivative discontinuity in the solution. Figure \ref{fig:HamJac-error-conv}
depicts the convergence in error for uniformly and non-uniformly spaced
points as specified by the mappings defined in Figure \ref{fig:HamJac-comparison-P}.
We note that our strategy offers a substantial improvement in solution
accuracy for small numbers of mesh-points; it even overcomes the inherent
instability of centered-differences. To illustrate the improved accuracy
for small $n$, Figure \ref{fig:HamJac-det-soln-resid} depicts a)
the solutions generated by $Q$ and $Q^{0}$ as well as b the absolute
value of the pointwise residuals for both mesh-point mappings.%
\begin{figure}
\begin{centering}
\includegraphics[width=0.45\columnwidth]{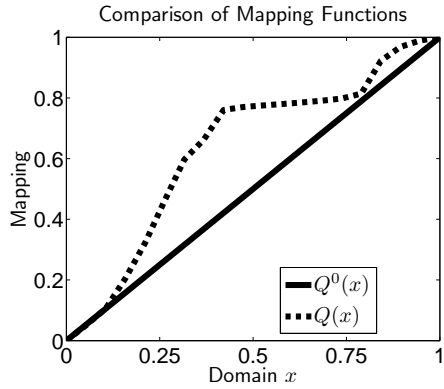}
\par\end{centering}

\caption{\label{fig:HamJac-comparison-P}Mapping functions $Q$ and $Q^{0}$.
Note how there will be a large percentage of the points at $\pi/4$.}

\end{figure}
\begin{figure}
\centering{}\includegraphics[width=0.45\columnwidth]{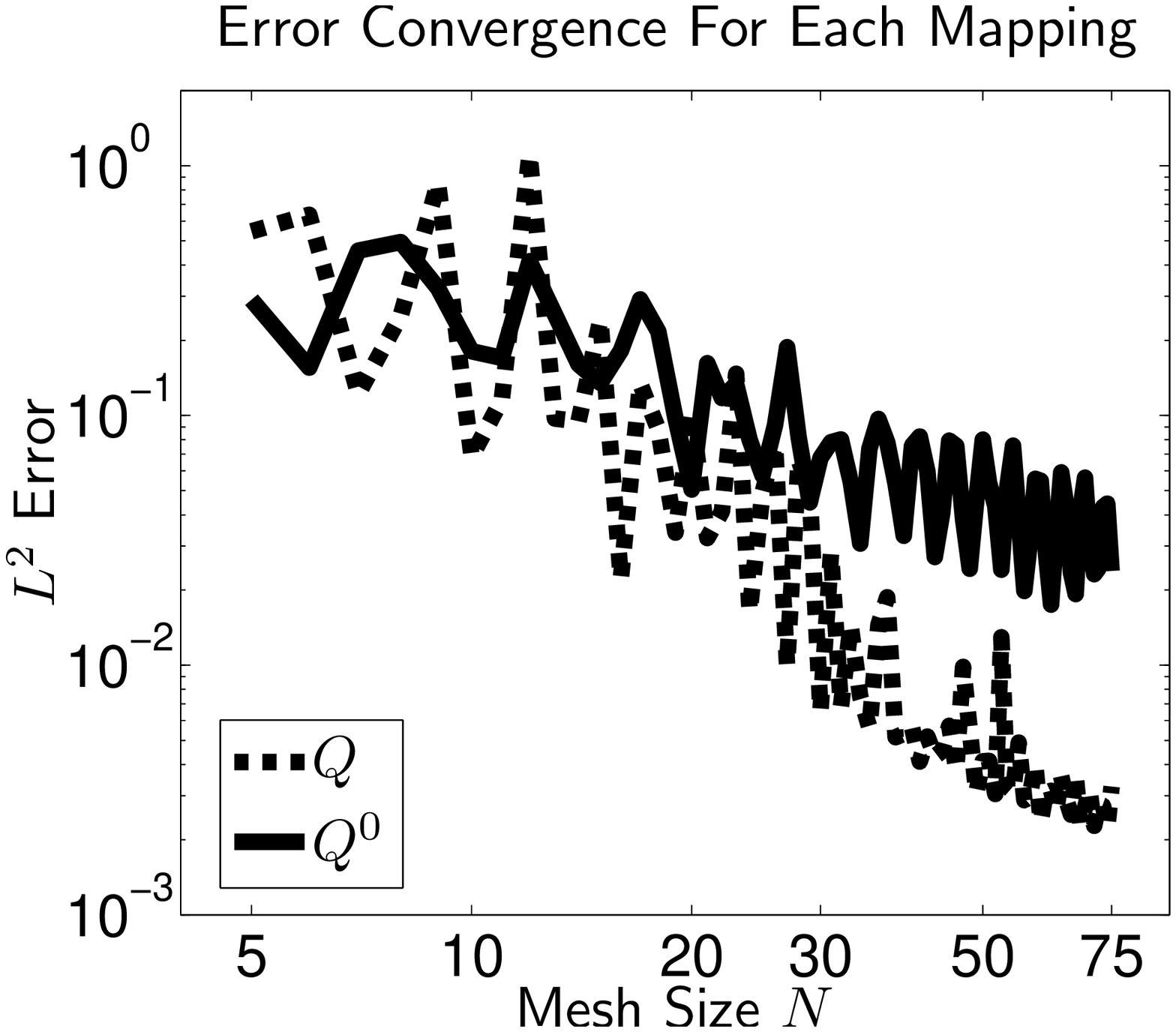}\caption{\label{fig:HamJac-error-conv}Error convergence for uniformly and
non-uniformly spaced points where the non-uniform spacing is determined
by the mapping in Figure \ref{fig:HamJac-comparison-P}.}

\end{figure}
\begin{figure}
\centering{}a)\includegraphics[width=0.45\columnwidth]{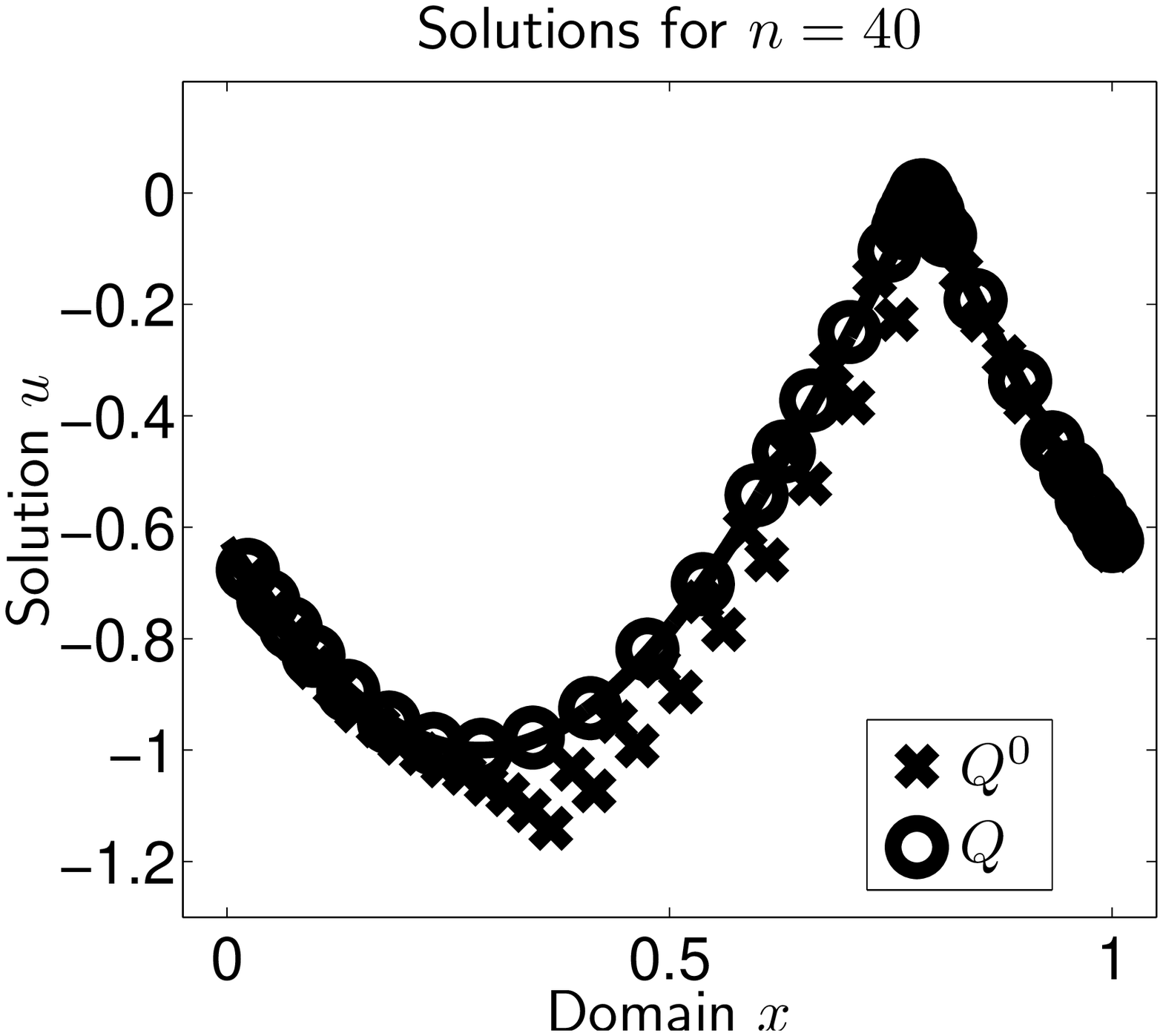}\hfill{}b)\includegraphics[width=0.45\columnwidth]{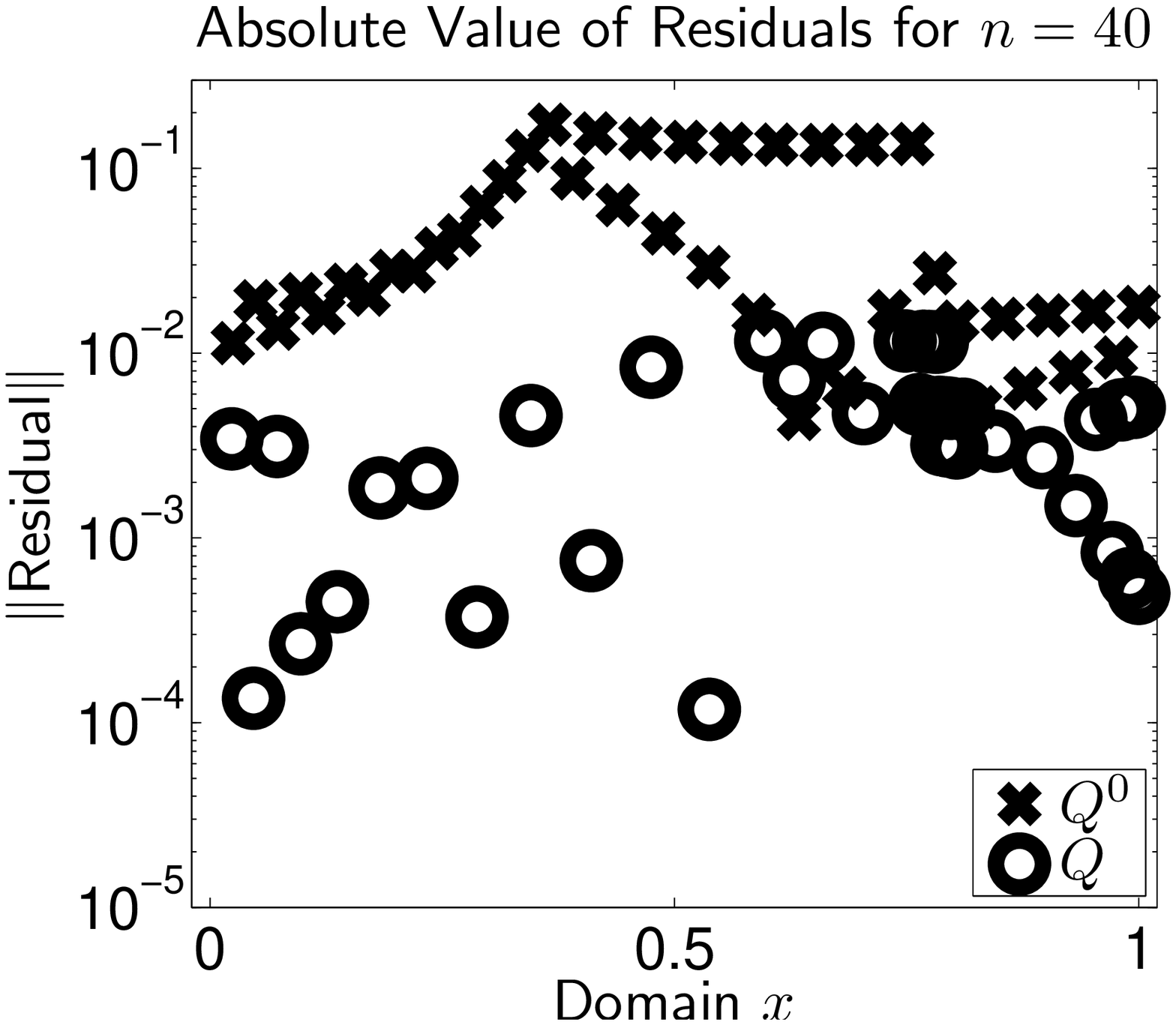}\caption{\label{fig:HamJac-det-soln-resid}a) Solution to the HJ equation \eqref{eq:HamJac eqn}-\eqref{fig:HamJac-comparison-P}
with $n=40$ mesh-points using the computed non-uniform mapping circles
'O' and uniformly spaced mesh-point 'X'. b) The absolute value of
the pointwise residuals for both solutions.}

\end{figure}

\subsection{\label{sub:Oscillatory-Boundary-Value}Oscillatory Boundary Value
Problems}

The following problem was selected from \cite{leveque2007} specifically
because it illustrates a situation for which our approach does not
perform well. Consider the specific Poisson problem \begin{eqnarray}
u''(x) & = & -20+\frac{1}{2}\phi''(x)\cos(\phi(x))-\frac{1}{2}(\phi'(x))^{2}\sin(\phi(x))\label{eq:leveque equation}\\
 &  & u(0)=1\,;\, u(1)=3\label{eq:leveque BC}\end{eqnarray}
 with $a=0.5$. The solution with Dirichlet boundary conditions $u(0)=1$,
$u(1)=3$ is \[
u(x)=1+12x-10x^{2}+a\sin(\phi(x))\,.\]
This analytical solution is depicted for $\phi(x)=5\pi x^{3}$ and
$\phi(x)=20\pi x^{3}$ in Figures \ref{fig:Leveque-solution}a and
\ref{fig:Leveque-solution}b, respectively.%
\begin{figure}
\centering{}a)\includegraphics[width=0.45\columnwidth]{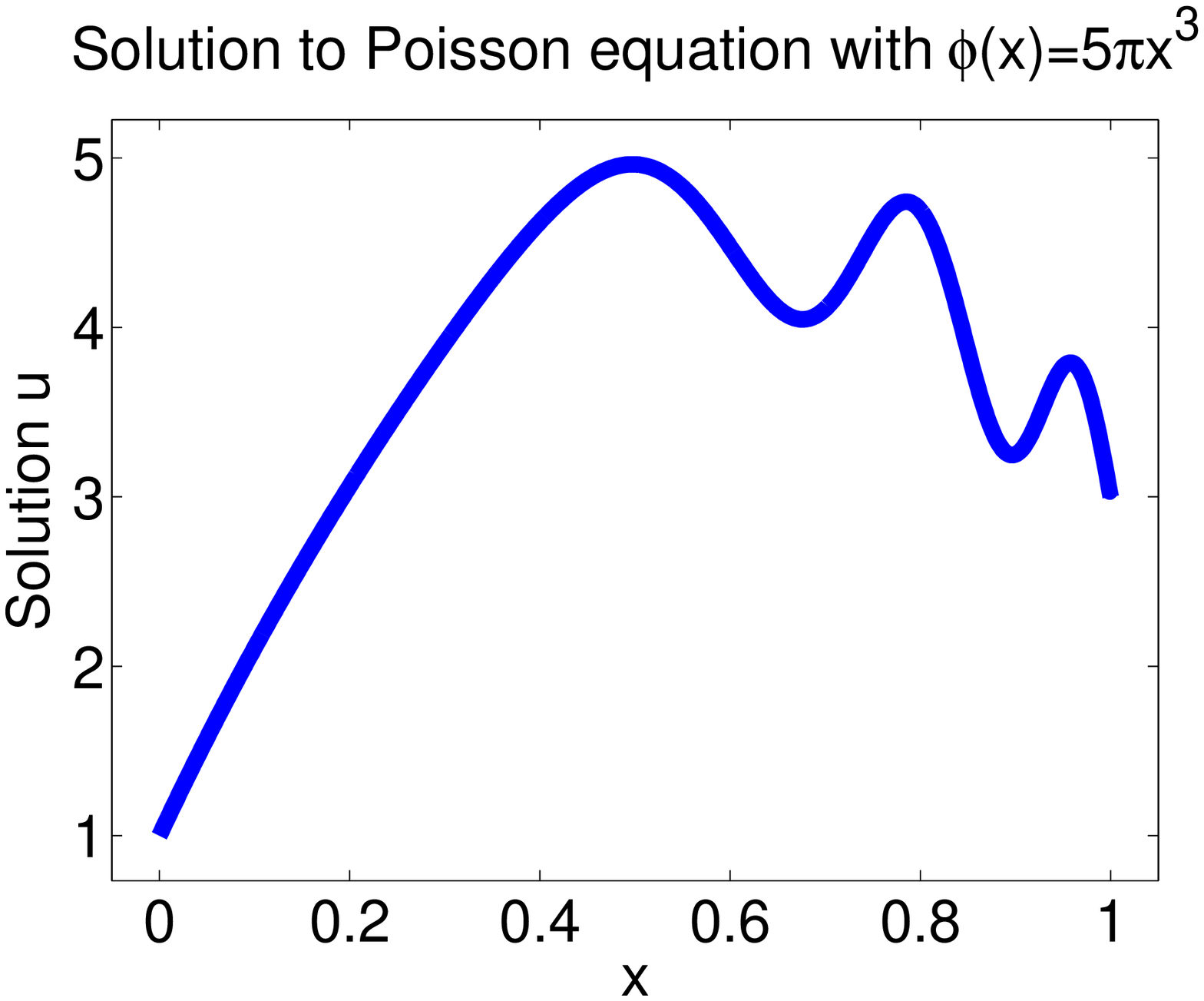}\hfill{}b)\includegraphics[width=0.45\columnwidth]{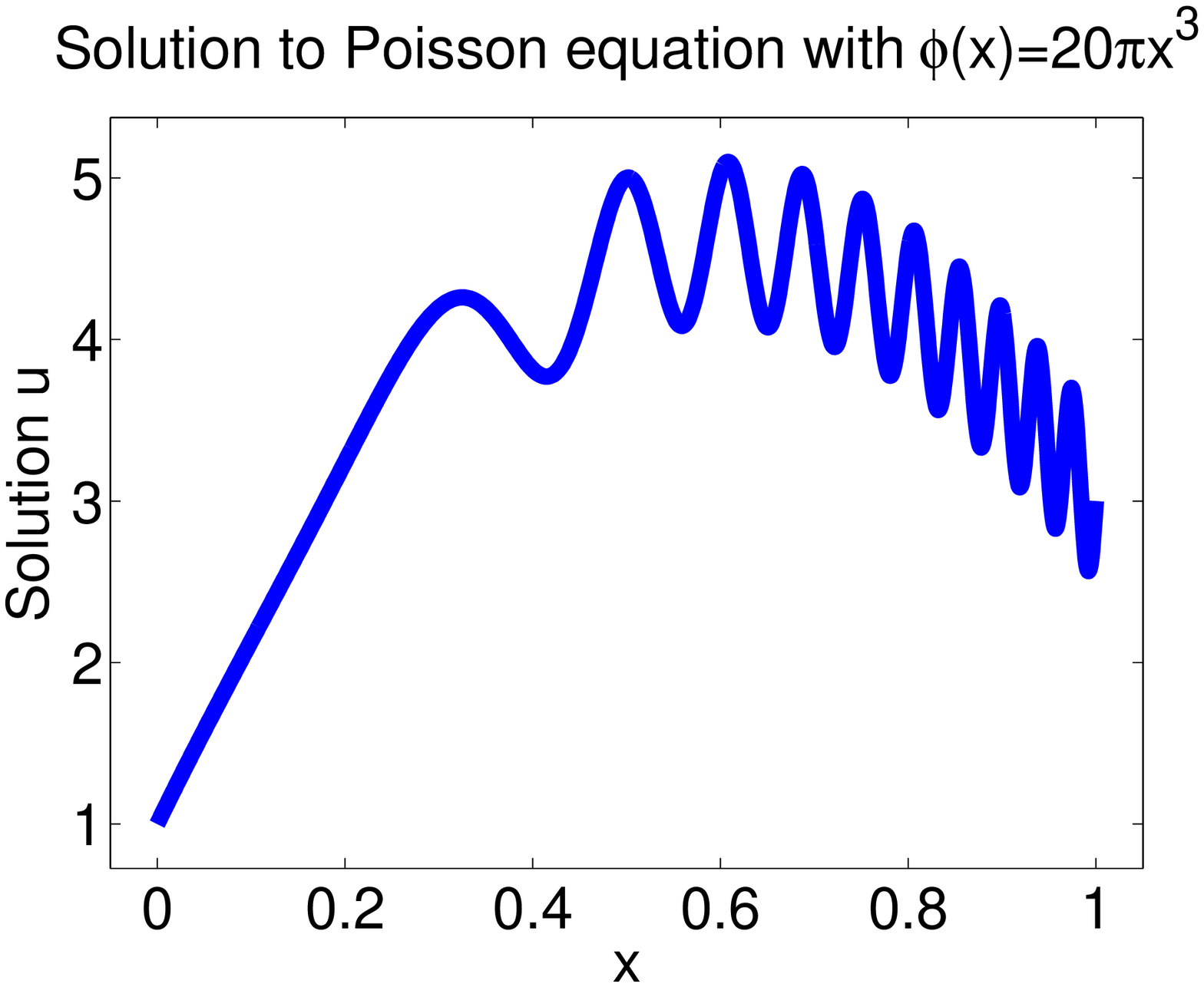}\caption{\label{fig:Leveque-solution}Analytical solution to Poisson equation
from with a) $\phi(x)=5\pi x^{3}$ and b) $\phi(x)=20\pi x^{3}$ from
\cite{leveque2007}.}

\end{figure}

As with the other problems, we first consider $m=15000$ samples of
length $n=10$ and solve \eqref{eq:leveque equation}-\eqref{eq:leveque BC}.
Figure \ref{fig:Leveque-unif-pdf} depicts contour plots of the probability
density of $p(\xi;\mathbb{X}_{(n)}(P))$, where $P$ is a uniform
distribution on $\bar{\mathbf{I}}$, for $\phi(x)=5\pi x^{3}$ and
$\phi(x)=20\pi x^{3}$ in Figures \ref{fig:Leveque-unif-pdf}a and
\ref{fig:Leveque-unif-pdf}b, respectively. We note the relatively
large total variance and that higher frequency forcing oscillations
appear to correlate with larger variance and plan to investigate this
in a future article.%
\begin{figure}
\centering{}a)\includegraphics[width=0.45\columnwidth]{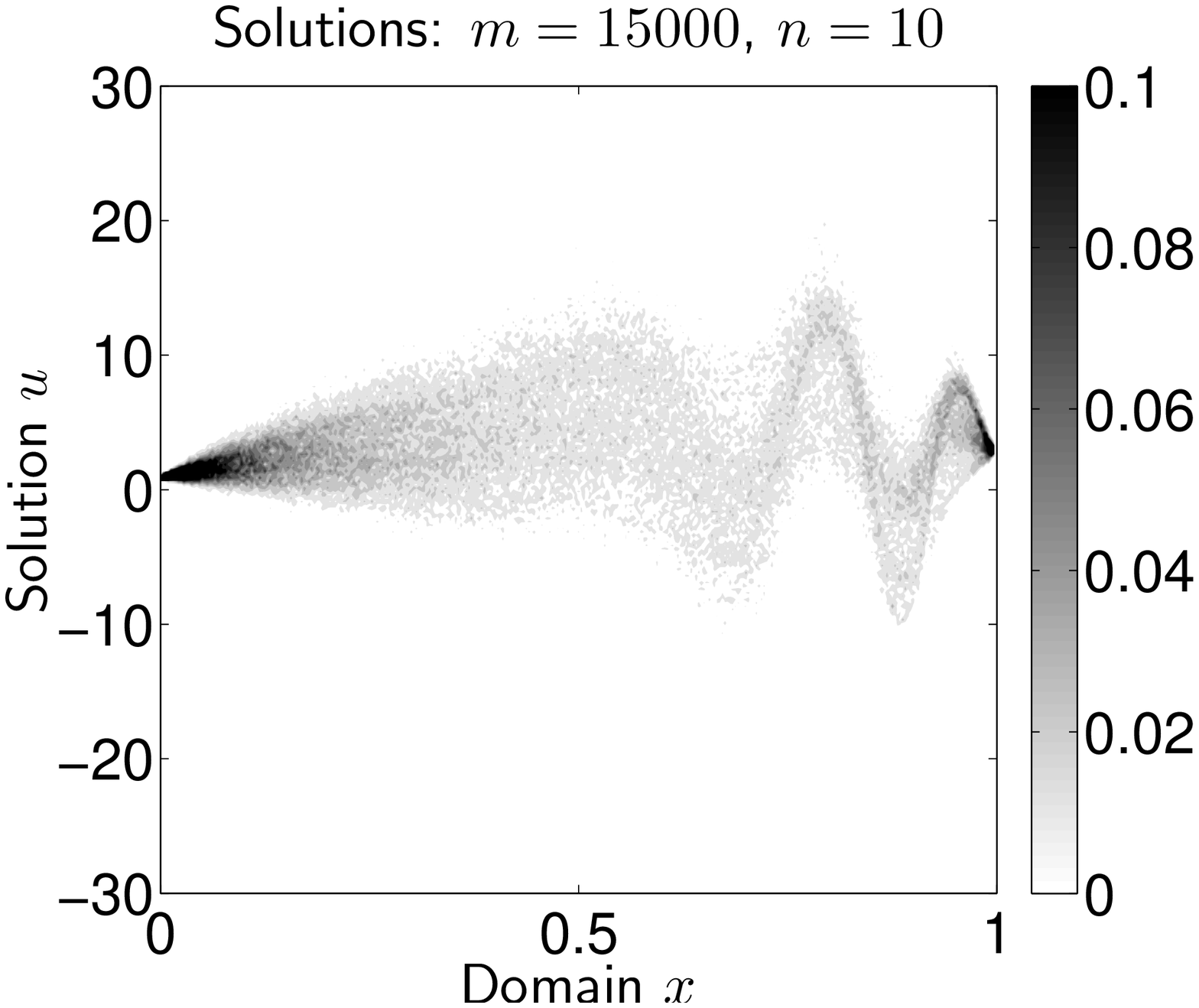}\hfill{}b)\includegraphics[width=0.45\columnwidth]{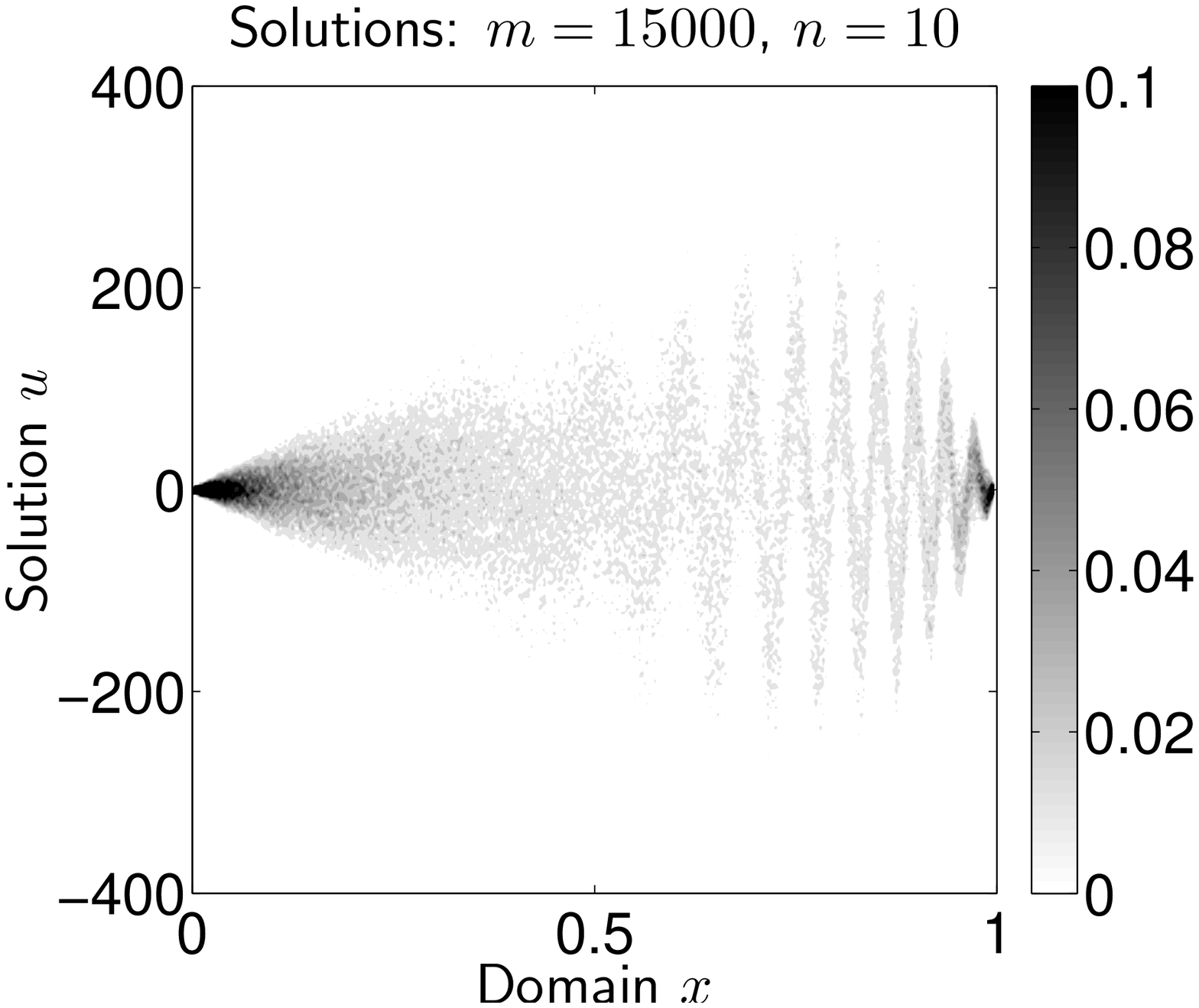}\caption{\label{fig:Leveque-unif-pdf}Contour plot of the probability density
of $p(x,\mathbb{X}_{(n)}(P))$ for $m=15000$ sampled meshes of length
$n=10$ for the Poisson equation defined in \eqref{eq:leveque equation}-\eqref{eq:leveque BC},
where $P$ is a uniform distribution on $\bar{\mathbf{I}}$.}

\end{figure}

Figures \ref{fig:Leveque-comparison-P}a) and \ref{fig:Leveque-comparison-P}b
depict our computed $Q$ for the two $\phi(x)$ functions along with
the uniform mapping using $n=10$. Figure \ref{fig:Leveque-error-conv}
depicts the convergence in error for uniformly and non-uniformly spaced
points as specified by the mappings defined in Figure \ref{fig:Leveque-comparison-P}.
Here, the $Q$ computed by our approach is noticeably worse than using
a uniform spacing. %
\begin{figure}
\centering{}a)\includegraphics[width=0.45\columnwidth]{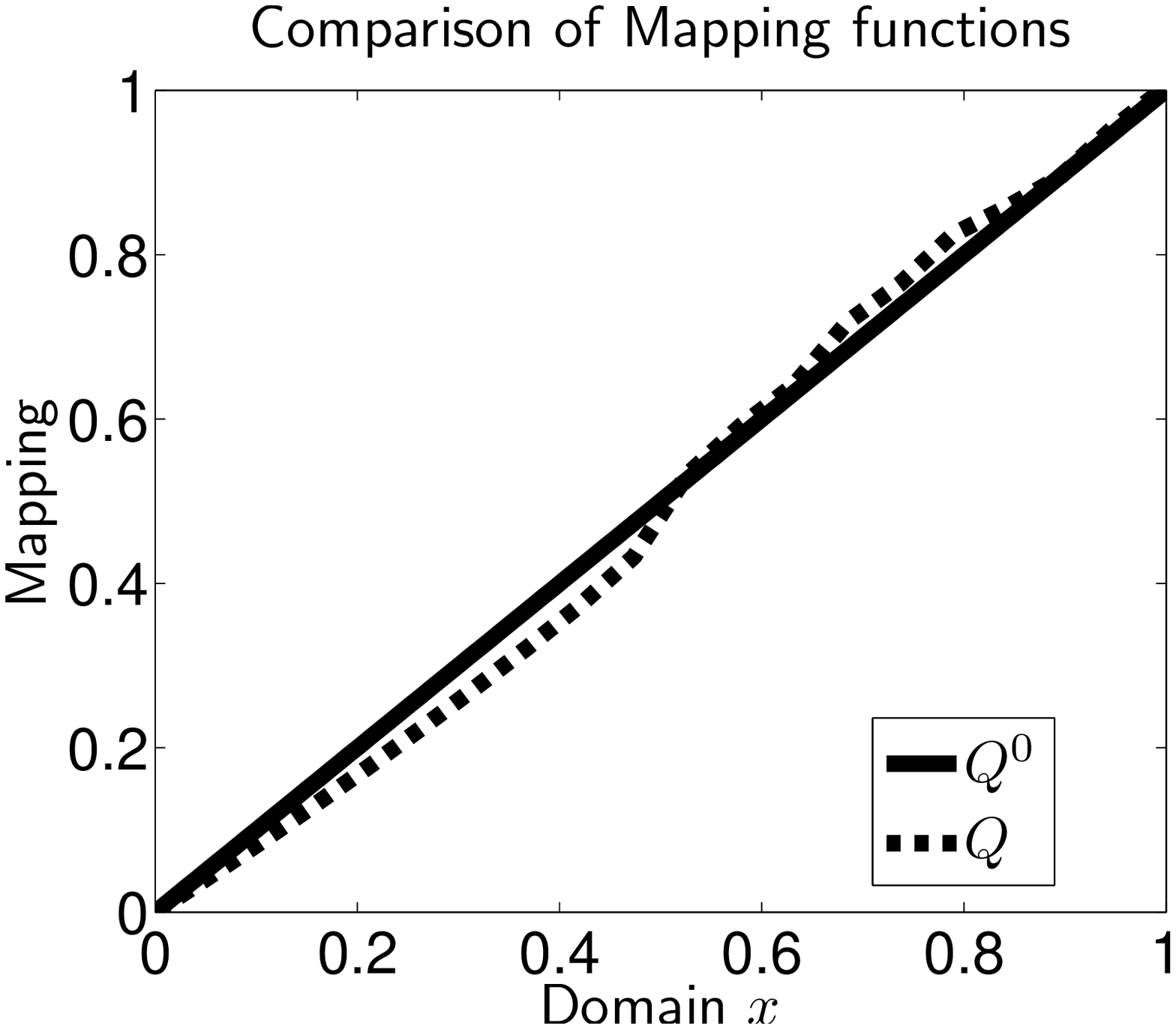}\hfill{}b)\includegraphics[width=0.45\columnwidth]{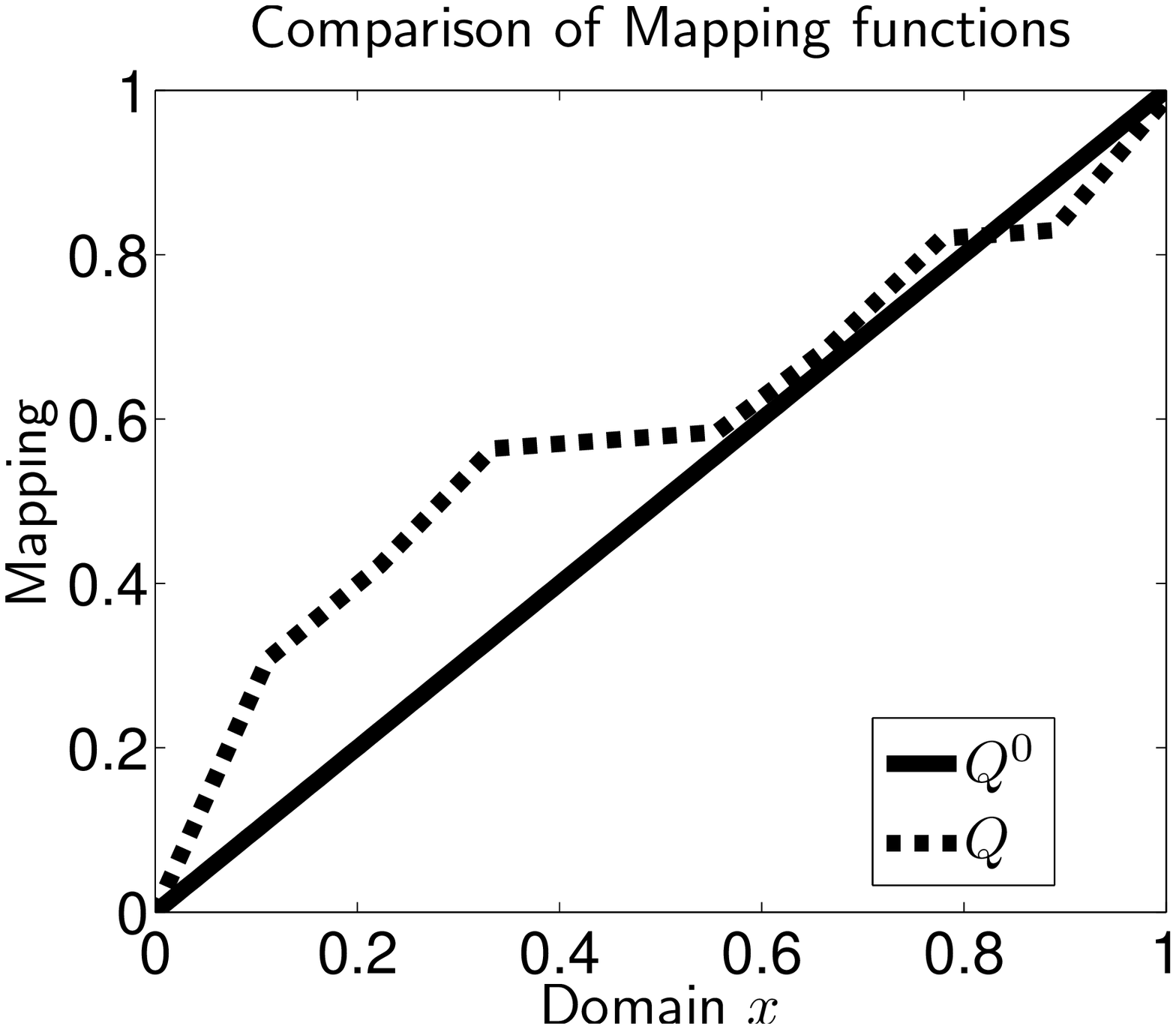}\caption{\label{fig:Leveque-comparison-P}Mapping functions $Q$ and $Q^{0}$
for a) $\phi(x)=5\pi x^{3}$ and b) $\phi(x)=20\pi x^{3}$.}

\end{figure}
\begin{figure}
\centering{}a)\includegraphics[width=0.45\columnwidth]{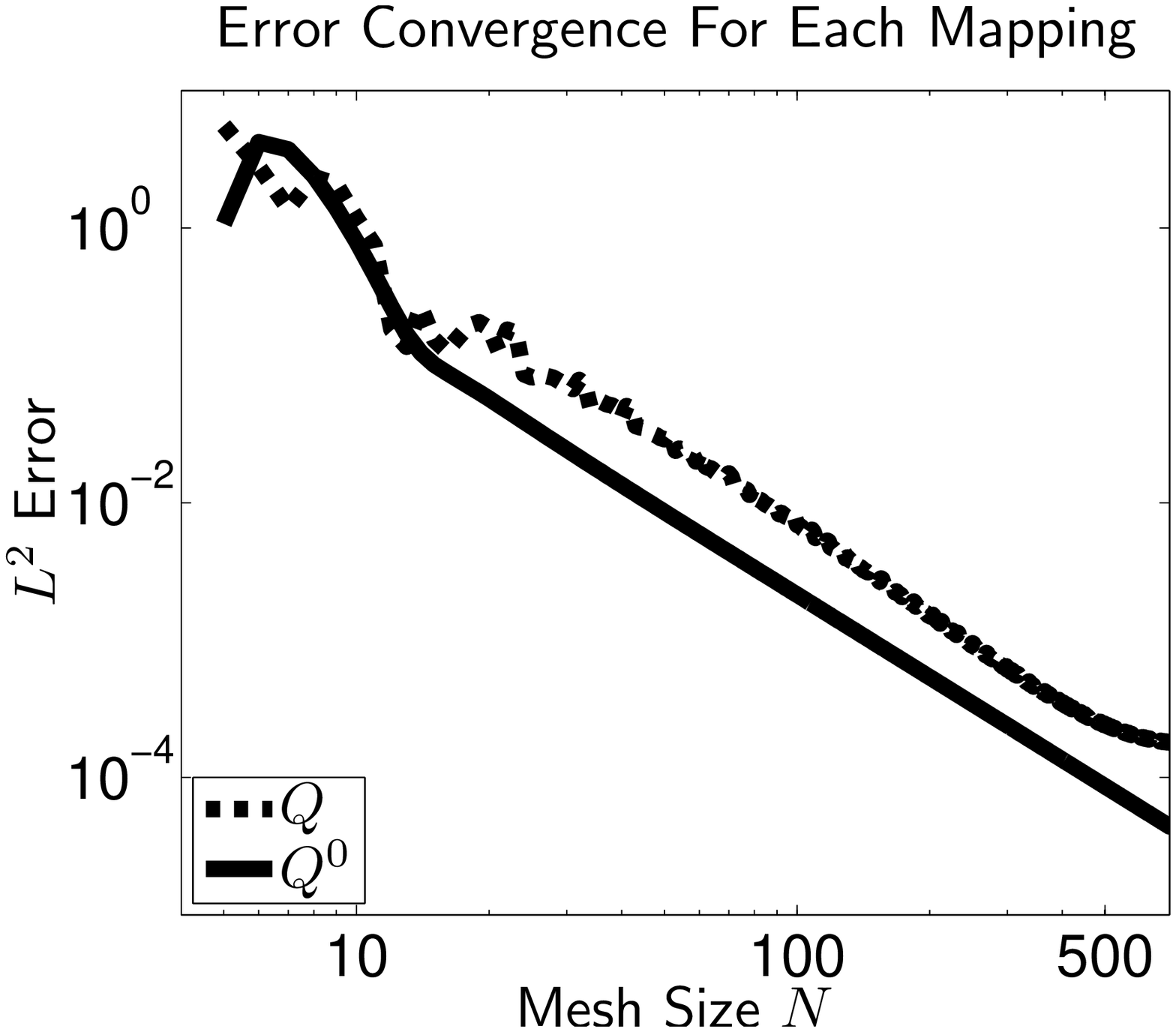}\hfill{}b)\includegraphics[width=0.45\columnwidth]{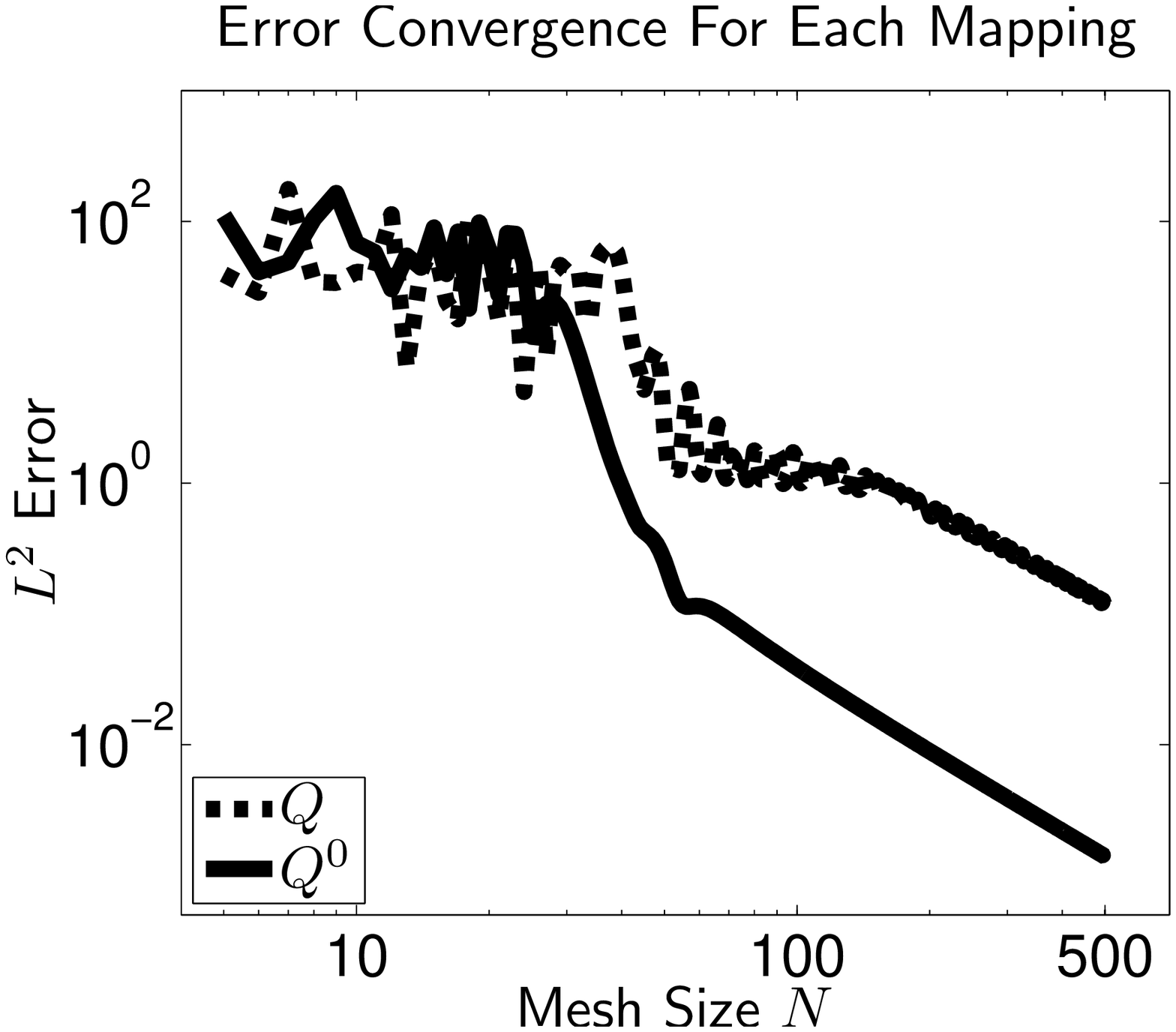}\caption{\label{fig:Leveque-error-conv}Error convergence for uniformly and
non-uniformly spaced points where the non-uniform spacing is determined
by the mapping in Figure \ref{fig:Leveque-comparison-P}.}

\end{figure}

\section{\label{sec:Summary}Summary}

\subsection{Conclusions and Future Work}

In this article, we have proposed a strategy for identifying non-uniform
mappings to improve solution accuracy when using a smaller number
of points. We find that our strategy performs well when faced with
local phenomena, i.e., when there is a subset of the domain which
would benefit from a higher density of mesh-points. With behaviors
which are fundamentally global, such as oscillations, our approach
fares no better than with uniformly placed points.

The immediate goal for the next article is to apply our approach to
higher dimensional systems where we envision the benefit of the $\mathcal{O}(\sqrt{m})$
convergence (independent of $n$) will become even more pronounced.
With the recent release of software to enable calling CUDA (Compute
Unified Device Architecture \cite{cuda}) from Matlab, we also plan
to develop an implementation on a GPGPU. Among the open theoretical
questions we plan to investigate in followup articles, the most pressing
is how to formalize taking the information in the point cloud of sparse-mesh
solutions and construct $Q$. We envision development of a criteria,
which is based in part on the operator discretization strategy and
part on the form of the PDE.

Our algorithm could also greatly benefit from connections to the experimental
design literature. There are criteria designed to aid in choosing
optimal sampling locations, which could provide a more efficient computation
of $Q$ \cite{ad1992}. Our original vision for this work involved
casting $P^{-1}$ as the mapping $Q$ and developing an inverse problem
strategy to identify an optimal $P^{*}$ via optimizing $\bar{v}(P)$.
For this paper, the optimization was not nearly as efficient as simply
computing $Q$. There are, however, stochastic approximation algorithms
(such as Kiefer-Wolfowitz \cite{kw1952amstat}) which are designed
to optimize stochastic functions and there is significant literature
within the control theory community on this topic \cite{ky2003StochasticApproximation}.
This would allow reduction in the number of samples needed to compute
the total variance. We also plan to investigate an improved strategy
for the function approximation choices (such as how many $n$ are
needed). A long-term goal is to investigate how our approach could
be used to update a given set of mesh-points to improve accuracy.
Lastly, there are many opportunities to extend the theoretical analysis.
In particular, the function theoretic basis for the finite element
approach could prove useful in establishing convergence the the appropriate
Sobolev space. Lastly, the rich literature of random matrix theory
will of course improve our knowledge of the distribution of eigenvalues
for the approximate finite difference operator.

\subsection{Discussion}

The widespread availability of massively multi-core architectures
such as General-Purpose Graphics Processing Units (GPGPU's) provides
a unique opportunity for investigating non-traditional and non-conventional
approaches to solving computational problems. It is an opportunity
to completely re-imagine traditional numerical analysis algorithms.
For example, in \cite{co2011jscicomp}, the authors propose a parallel
ODE solver which would be particularly slow on single-core CPU's.
However, when implemented on a GPGPU, it can realize spectacular improvements
in computational efficiency and accuracy. Similarly, the approach
presented above rests on the notion that future trends in architecture
design will produce processors with a large numbers of computational
cores and relatively modest memory resources. Accordingly, we envision
that the implementation of the sampling step on a GPGPU will yield
substantial speedups in efficient mesh generation, since the algorithm
will theoretically scale with the number of computational cores. Like
the parallel ODE solver in \cite{co2011jscicomp}, a single-core implementation
would be completely unacceptable. We assert that algorithms in this
class are beyond embarrassingly parallel; implementing it on anything
but a massively multi-core architecture would be scandalous.

\section{Acknowledgments}

D. M. Bortz and A. J. Christlieb are supported in part by DOD-AFOSR
grant FA9550-09-1-0403. The authors would also like to thank V. Duki\'c
for insightful comments on an earlier draft of this manuscript.

\bibliographystyle{plain}
\bibliography{/home/dmbortz/colorado/bib/string,/home/dmbortz/colorado/bib/article,/home/dmbortz/colorado/bib/book,/home/dmbortz/colorado/bib/incollection,/home/dmbortz/colorado/bib/htbanks,/home/dmbortz/colorado/bib/unpublished,/home/dmbortz/colorado/bib/inproceedings,/home/dmbortz/colorado/bib/dmbortz,/home/dmbortz/colorado/bib/misc,/home/dmbortz/colorado/bib/inbook,/home/dmbortz/colorado/bib/ajchristlieb}

\appendix

\section{Notation}
\begin{lyxlist}{00.00.0000}
\item [{$A_{Q(x^{n})}$}] Matrix generated by a three point stencil (potentially
non-uniform) discretizing the second derivative in a Poisson equation.
The subscript indicates the set of points to use in the discretization.
\item [{$f_{Q(x^{n})}$}] Vector generated by evaluating the forcing function
in a Poisson equation. The subscript indicates the set of points to
use in the discretization.
\item [{$\mathbf{I}$}] Domain for the BVP. In this work, $\mathbf{I}=(0,1)$
and $\bar{\mathbf{I}}=[0,1]$. It is also the probability sample space
for $X$.
\item [{$m$}] Number of sampled meshes.
\item [{$\mu$}] Pointwise mean of $p$, which allows computation of the
expected value of the approximate solutions at a given point $\xi\in\mathbf{I}$,
given that $\xi$ is a point in the mesh.
\item [{$n$}] Number of points in a given mesh.
\item [{$p$}] Function mapping a random vector of length $n$ and a point
$\xi\in\mathbf{I}$ to a random variable. 
\item [{$P$}] Probability distribution.
\item [{$Q$}] Mesh function which maps $\mathbf{I}\to\mathbf{I}$. 
\item [{$\tau_{Q(x^{n})}$}] Taylor Series truncation error from using
a three point stencil to discretize the second derivative on a (potentially
non-uniform) mesh. The subscript indicates the set of point to use
in the discretization.
\item [{$u$}] Analytical solution to a BVP.
\item [{$\mathbf{u}_{0}^{n}$}] $n$-dimensional numerical approximation
to $u$.
\item [{$U$}] Function which maps $\mathbf{I}^{n}\to\mathbb{R}^{n}$ taking
a mesh and solving a given BVP on those mesh points.
\item [{$v$}] Pointwise variance of $p$, which allows computation of
the variance of the approximate solutions at a given point $\xi\in\mathbf{I}$,
given that $\xi$ is a point in the mesh.
\item [{$\bar{v}$}] Average of variance $v$ over $\bar{\mathbf{I}}$.
\item [{$\mathbf{x}_{n}^{0}$}] Mesh of $n$ uniformly spaced points in
$\mathbf{I}$.
\item [{$X(P)$}] Random variable with distribution $P$.
\item [{$\mathbb{X}_{n}$}] Vector of $n$ i.i.d. random variables. $\mathbb{X}_{n}=(X_{1},X_{2},\ldots,X_{n})^{T}$.
\item [{$\mathbb{X}_{(n)}$}] Sorted vector of $n$ i.i.d. random variables.
$\mathbb{X}_{(n)}=(X_{(1)},X_{(2)},\ldots,X_{(n)})^{T}$.
\item [{$\xi$}] Arbitrary point in $\mathbf{I}$.
\end{lyxlist}

\end{document}